\newcommand{\N}{\mathbb{N}} % natuerliche Zahlen
\newcommand{\R}{\mathbb{R}} % reelle Zahlen
\newcommand{\1}{\mathds{1}}
\theoremstyle{plain}
\newtheorem{thrm}{Theorem}[section]
\newtheorem{prop}[thrm]{Proposition}
\newtheorem{cor}[thrm]{Corollary}
\newtheorem{lemma}[thrm]{Lemma}
\theoremstyle{definition}
\newtheorem{remark}[thrm]{Remark}
\def\N{\bbN}
\numberwithin{equation}{section}
\DeclareMathSymbol{\leqslant}{\mathalpha}{AMSa}{"36}
\DeclareMathSymbol{\geqslant}{\mathalpha}{AMSa}{"3E}
\DeclareMathSymbol{\doteqdot}{\mathalpha}{AMSa}{"2B}
\DeclareMathSymbol{\circlearrowright}{\mathalpha}{AMSa}{"08}
\DeclareMathSymbol{\subsetneq}{\mathalpha}{AMSb}{"28}
\DeclareMathSymbol{\supsetneq}{\mathalpha}{AMSb}{"29}
\renewcommand{\leq}{\;\leqslant\;}
\renewcommand{\geq}{\;\geqslant\;}
\newcommand{\upchi}{\raise 2pt \hbox{$\chi$}}
\def\writefig#1 #2 #3 {\rlap{\kern #1 truecm \raise #2 truecm
		\hbox{#3}}}
\newcommand{\bbN}{{\mathbb N}}
\begin{document}
	%{\hfill\small \version} \vspace{2mm}
	
	\title{Effective mass of the Polaron: a lower bound}
	
	\author{Volker Betz}
	\address{Volker Betz \hfill\newline
		\indent FB Mathematik, TU Darmstadt}
	\email{betz@mathematik.tu-darmstadt.de}
	
	\author{Steffen Polzer}
	\address{Steffen Polzer \hfill\newline
		\indent Section de mathématiques, Université de Genève
	}
	\email{steffen.polzer@unige.ch}
	
	\maketitle
	\begin{quote}
	{\small
		{\bf Abstract.}
		We show that the effective mass of the Fröhlich Polaron is bounded below by $c \alpha^{2/5}$ for some 
		constant $c>0$ and  for all coupling constants $\alpha$.
		The proof uses the point process representation of the path measure of the Fröhlich Polaron.
	} 	
\end{quote}
\section{Introduction and results}
The Polaron models the slow movement of an electron in a polar crystal. The electron drags a cloud of polarization along and thus appears heavier, it has an ``effective mass'' larger then its mass without the interaction.
In the Fröhlich model of the Polaron, the Hamiltonian describing the interaction of the electron with the lattice vibrations is the operator given by
\begin{equation*}
	H_\alpha = \frac12 p^2 \otimes \1 + \1\otimes \mathbf N + \frac{\sqrt{\alpha}}{\sqrt{2} \pi} \int_{\mathbb R^3} 
	\frac{1}{|k|}\big(e^{i k \cdot x} a_k + e^{-i k \cdot x} a^\ast_k \big) \, 
	\mathrm d k
\end{equation*}
that acts on $L^2(\R^3) \otimes \mathcal F$ (where $\mathcal F$ is the bosonic Fock space over $L^2(\R^3)$).
Here $x$ and $p$ are the position and momentum operators of the electron, $\mathbf N \equiv  \int_{\mathbb R^3} a^\ast_k a_k\, \mathrm d k$ is the number operator, the creation and annihilation operators $a_k^\ast$ and $a_k$ satisfy the canonical commutation relations $[a^\ast_k, a_{k'}] = \delta(k-k')$ and $\alpha>0$ is the coupling constant.
The Hamiltonian commutes with the total momentum operator $p\otimes\1 + \1 \otimes P_f$, where $P_f \equiv \int_{\R^3} k a_k^* a_k \mathrm dk$ is the momentum operator of the field, and has a fiber decomposition in terms of the Hamiltonians
\begin{equation*}
	H_\alpha(P) = \frac12 (P - P_f)^2 + \mathbf N + \frac{\sqrt{\alpha}}{\sqrt{2} \pi} \int_{\mathbb R^3} 
	\frac{1}{|k|}(a_k + a^\ast_k) \, \mathrm dk
\end{equation*}
at fixed total momentum $P$ (which act on $\mathcal F$). Of particular interest is the energy-momentum relation $E_\alpha(P) \coloneqq \inf \operatorname{spec}(H_\alpha(P))$. It is known that $E_\alpha$ is rotationally symmetric and has a global minimum at $P=0$, which is believed to be unique \cite{DySp20}. The effective mass $m_{\text{eff}}(\alpha)$ is defined as the inverse curvature at $P=0$ i.e.
 \begin{equation*}
 	E_\alpha(P) - E_\alpha(0) = \frac{1}{2m_{\text{eff}}(\alpha)} |P|^2 + o(|P|^2).
 \end{equation*}
While the asympotics of the ground state energy $E_\alpha(0)$ in the strong coupling limit $\alpha \to \infty$ are known, the asymptotics of the effective mass $m_\text{eff}(\alpha)$ still remain an important open problem. For the ground state energy, Donsker and Varadhan \cite{DoVa83} gave a probabilistic proof (using the path integral formulation) of Pekars conjecture \cite{Pe49} that states
\begin{equation}
	\label{Equation: Pekhar variational formula}
	\lim_{\alpha \to \infty} \frac{E_\alpha(0)}{\alpha^2} = \min_{\psi \in H^1, \|\psi\|_{L^2} = 1} \Big[ \| \nabla \psi\|^2_{L_2} - \sqrt{2}\iint_{\R^3 \times \R^3} \mathrm dx \mathrm dy \, \frac{|\psi(x) \psi(y)|^2}{|x-y|} \Big].
\end{equation}
There also exists a functional analytic proof by Lieb and Thomas \cite{LiTh97}, that additionally contains explicit error bounds.
While it has been conjetured by Landau and Pekar \cite{LP48} that
\begin{equation}
	\label{Equation: Asymptotics effective mass}
	\lim_{\alpha \to \infty} \frac{m_{\text{eff}}(\alpha)}{\alpha^4} = \frac{16 \sqrt 2\pi}{3} \int_{\R^3} \mathrm dx \, |\psi(x)|^4, \quad  \psi \text{ minimizer of \eqref{Equation: Pekhar variational formula}},
\end{equation}
so far the only progress on the asympotics of the effective mass has been achieved by Lieb and Seiringer \cite{LiSe20}, who showed that
$
	\lim_{\alpha \to \infty} m_\text{eff}(\alpha) = \infty	
$
by estimating
\begin{equation*}
	\frac{1}{2m_\text{eff}(\alpha)} \leq  \lim_{P\to 0} \frac{1}{|P|^2} \big( \langle \phi_{\alpha, P}, H_\alpha(P) \phi_{\alpha, P} \rangle - E_\alpha(0)\big)
\end{equation*}
 with suitably chosen trial states $\phi_{\alpha, P}$.
For models with stronger regularity assumptions (excluding the Fröhlich Polaron), quantitative estimates on the effective mass were recently obtained in \cite{MS21}. Our goal is to show that there exists some constant $c>0$ such that for all $\alpha>0$
\begin{equation}
\label{Equation: Our growth estimate}
	m_\text{eff}(\alpha) \geq c a^{2/5}
\end{equation}
and thereby giving a first quantitative growth bound for the effective mass of the Fröhlich Polaron.\\
\\
We give a short introduction into the probabilistic representation of the effective mass. We refer the reader to \cite{DySp20} for details in this representation, and to 
\cite{Moe06} for a review covering functional analytic properties of the Polaron.
An application of the Feynman-Kac formula to the semigroup $(e^{-TH_\alpha})_{T \geq 0}$ leads to the path measure 
\begin{equation}
\label{Equation: Definition of our Gibb measure}
\mathbb P_{\alpha, T}(\mathrm d\mathbf x) = \frac{1}{Z_{\alpha, T}}\exp\bigg( \frac{\alpha}{2} \int_{0}^T\int_{0}^T \frac{e^{-|t-s|}}{|\mathbf x_{t}- \mathbf x_{s}|} \, \mathrm ds \mathrm dt \bigg) \mathcal W(\mathrm d \mathbf x)
\end{equation}
in finite volume $T>0$, where $\mathcal W$ is the distribution of three dimensional Brownian motion and the partition function $Z_{\alpha, T}$ is a normalization constant.
In particular, the partition function can be expressed as the matrix element $Z_{\alpha, T} = \langle \Omega, e^{-T H_\alpha(0)} \Omega \rangle$, where $\Omega$ is the Fock vacuum. This leads to $E_\alpha(0) = - \lim_{T\to \infty} \log(Z_{\alpha, T})/T$, which was applied (up to boundary conditions) in the aforementioned proof of Pekars conjecture by Donsker and Varadhan.
Spohn conjectured \cite{Sp87} that the path measure (in infinite volume) converges under diffusive rescaling to Brownian motion with some diffusion constant $\sigma^2(\alpha)$, and showed that the effective mass then has the representation
\begin{equation*}
	m_{\text{eff}}(\alpha) = (\sigma^2(\alpha))^{-1}.
\end{equation*} The existence of the infinite volume measure and the validity of the central limit theorem were shown by Mukherjee and Varadhan \cite{MV19} for a restricted range of coupling parameters. The proof relies on a representation of the measures \eqref{Equation: Definition of our Gibb measure} as a mixture of Gaussian measures, the mixing measure being the distribution of a point process on $\{(s, t)\in \R^2: s<t\} \times (0, \infty)$ which has a renewal structure.
This approach was extended in \cite{BP21} to a broader class of path measures and a functional central limit theorem, and, for the Fröhlich Polaron, to all coupling parameters (relying on known spectral properties of $H_\alpha(0)$). We also refer the reader to \cite{MV21}, \cite{BMPV22}, where proofs are given that do not need the spectral properties of $H_\alpha(0)$. We use this point process representation and derive a ``variational like'' formula for the effective mass (see Proposition \ref{Proposition: Point process representation of diffusion constant}). We obtain the estimate \eqref{Equation: Our growth estimate} by minimizing over sub-processes of the full point process.\\
\\
Without additional effort, we can allow for a bit more generality and look at the probability measures $\mathbb P_{\alpha, T}$ on $C([0, \infty), \R^d)$ defined by
\begin{equation}
\label{Equation: Definition of path measures}
\mathbb P_{\alpha, T}(\mathrm d\mathbf x) = \frac{1}{Z_{\alpha, T}}\exp\bigg( \frac{\alpha}{2} \int_{0}^T\int_{0}^T g(t-s) v(\mathbf x_{s, t}) \, \mathrm ds \mathrm dt \bigg) \mathcal W(\mathrm d \mathbf x)
\end{equation}
where $d\geq 2$, $v(x) = 1/|x|^\gamma$ with $\gamma \in [1, 2)$ and $g:[0, \infty) \to (0, \infty)$ is a probability density with finite first moment satisfying $\sup_{t\geq 0} (1+t)g(t) <\infty$ and $\mathbf x_{s, t} \coloneqq \mathbf x_t - \mathbf x_s$ for $\mathbf x \in C([0, \infty), \R^d)$ and $s, t\geq 0$. By the results in \cite{BMPV22}, these conditions are sufficient for the existence of an infinite volume measure and the validity of a functional central limit theorem in infinite volume. 
By the proof of the central limit theorem given in \cite{MV19}\footnote{it is generalizable to the measures above, see Remark 4.14 in \cite{BP21}}, the respective diffusion constant $\sigma^2(\alpha)$ can also be obtained by taking  the ``diagonal limit'', that is
\begin{equation*}
	\sigma^2(\alpha) = \lim_{T \to \infty} \frac{1}{dT} \mathbb E_{\alpha, T}[|X_{0, T}|^2].
\end{equation*}
Here $X_t(\mathbf x) \coloneqq \mathbf x_t$ for any $\mathbf x \in C([0, \infty), \R^d)$ and $t\geq 0$.
	\begin{thrm}
		\label{Theorem: Our result}
		There exists a constant $C>0$ (depending on $\gamma$, $g$ and $d$) such that
		\begin{equation*}
		\sigma^2(\alpha) \leq C \alpha^{-2/(2+d)}
		\end{equation*}
		for all $\alpha>0$. Consequently, there exists a constant $c>0$ such that the effective mass of the Fröhlich Polaron satisfies $m_{\operatorname{eff}}(\alpha) \geq  c \alpha^{2/5}$ for all $\alpha>0$.
	\end{thrm}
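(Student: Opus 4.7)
The approach is to invoke the point process representation of $\sigma^2(\alpha)$ (Proposition \ref{Proposition: Point process representation of diffusion constant}), under which the path measure \eqref{Equation: Definition of path measures} becomes a mixture of Gaussians indexed by a marked point process $\Xi$ on $\{s<t\}\times(0,\infty)$. The mark density arises from the Gaussian representation $|x|^{-\gamma}\propto\int_0^\infty e^{-u|x|^2}u^{\gamma/2-1}\,du$; conditional on $\Xi=\{(s_i,t_i,u_i)\}$, the trajectory is Brownian with the quadratic pinning $-u_i|X_{t_i}-X_{s_i}|^2$ added to the exponent, and $\sigma^2(\alpha)$ is then obtained by averaging $(dT)^{-1}\mathbb E[|X_T-X_0|^2\mid\Xi]$ over $\Xi$ and letting $T\to\infty$.

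The key structural property I would exploit is monotonicity: enlarging the point configuration only tightens the pinning and can only decrease the conditional variance, so substituting any measurable sub-process $\Xi'\subseteq\Xi$ yields an \emph{upper} bound on $\sigma^2(\alpha)$. I would fix a time-scale $\tau=\tau(\alpha)$ to be optimized and take $\Xi'$ to consist of those pairs $(s,t,u)\in\Xi$ with $t-s\leq\tau$ and $u$ in a suitable window (so that each pinning confines the increment $X_t-X_s$ near the natural Brownian scale $\sqrt\tau$). The intensity of such pairs is of order $\alpha\int_0^\tau g(r)\,dr\asymp\alpha\tau$ per unit time, so in time $T$ one gets roughly $\alpha\tau T$ constraints.

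A direct computation of the conditional variance then reduces, by Gaussian integration over the Brownian bridges between consecutive pinned times, to a finite-dimensional quadratic form on the pinned values; its Green's function at the endpoints $0$ and $T$ admits a clean effective-resistance interpretation on the random graph of ties in $\Xi'$. One then expects a quantitative estimate in $\tau$ and $\alpha$ whose minimum occurs at $\tau\asymp\alpha^{-2/(2+d)}$ and delivers the exponent of the theorem; the Fröhlich Polaron statement follows by specializing to $d=3$, $\gamma=1$ and using $m_{\operatorname{eff}}(\alpha)=1/\sigma^2(\alpha)$. I expect the main obstacle to be precisely this quantitative Gaussian/resistance estimate: one must extract the correct joint scaling between the mark variable $u$ (which sets the spatial scale $1/\sqrt u$ of each pinning) and the temporal span $\tau$ (which sets the Brownian scale $\sqrt\tau$), since it is this balance, mediated by the mark density $u^{\gamma/2-1}$ in dimension $d$, that produces the exponent $2/(2+d)$.
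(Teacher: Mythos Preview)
Your high-level strategy---use the point process representation, exploit monotonicity to pass to a sub-process, then optimize---matches the paper's. The gap is that you treat the mixing measure as though it were a Poisson process with independently drawn marks. It is not: $\widehat\Gamma^0_{\alpha,T}$ is a \emph{tilted} point process (a Cox process whose random intensity involves the unknown path measure $\mathbb P_{\alpha,T}$ itself), and the marks delivered by the kernel $\kappa_0(\xi,\cdot)$ are correlated across the points of $\xi$. Your claim that ``the intensity of such pairs is of order $\alpha\int_0^\tau g$'' holds for the \emph{reference} Poisson process $\Gamma_{\alpha,T}$, not for the tilted one you must actually integrate against; and thinning on the event $\{u\in\text{window}\}$ does not decouple from the rest of the configuration. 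Without control of these two issues your sub-process $\Xi'$ has no tractable law, and the ``effective resistance'' computation you allude to cannot be carried out.

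The paper resolves this with two ideas you do not mention. First, replacing $v$ by $v_\varepsilon=v+\varepsilon$ leaves $\mathbb P_{\alpha,T}$ unchanged but splits the perturbed process as $\eta_{\alpha,T}+\eta^\varepsilon_{\alpha,T}$ with $\eta^\varepsilon_{\alpha,T}\sim\Gamma_{\varepsilon\alpha,T}$ a genuine Poisson process \emph{independent} of the intractable part; monotonicity then lets one discard $\eta_{\alpha,T}$ entirely and work with the Poisson piece. Second, Lemma~\ref{Lemma: stochastic domination of chi} shows that $\kappa_\varepsilon(\xi,\cdot)$ stochastically dominates an explicit product kernel, so the correlated marks may be replaced by independent $\{0,C\}$-valued coins with $\mathbb P(\text{mark}=C)\asymp C^{-d}$. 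Only after these two reductions does one obtain a thinned Poisson process to which the elementary renewal estimate (Lemma~\ref{Lemma: Thinned queue covers interval}) applies. The final optimization is not over a shrinking time-scale $\tau$ but over the mark threshold $C$, with interval lengths held \emph{fixed} in $[1,2]$: the uncovered fraction contributes $\asymp C^{d}/\alpha$, the residual pinning variance contributes $\asymp C^{-2}$, and balancing gives $C=\alpha^{1/(d+2)}$ and hence $\sigma^2(\alpha)\lesssim\alpha^{-2/(d+2)}$.
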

	As we will point out in Remark \ref{Remark: Lowee bound effective mass}, the results obtained in \cite{BMPV22} imply for the Fröhlich Polaron with minor additional effort the existence of some $\tilde c>0$ such that $\sigma^2(\alpha) \geq e^{-\tilde c\alpha^2}$ for reasonably large $\alpha$.

	\section{Point process representation}
	\label{Section: point process representation}
	By using the identity
	$
		\frac{1}{|x|} = \sqrt{\frac{2}{\pi}} \int_0^\infty \mathrm du \, e^{-u^2 |x|^2/2}
	$
	and expanding the exponential into a series, Mukherjee and Varadhan \cite{MV19} represented the path measure of the Fröhlich Polaron as a mixture of Gaussian measures $\mathbf P_{\xi, u}$, the mixing measure $\widehat{\Theta}_{\alpha, T}$ being the distribution of a suitable point process on $\{(s, t)\in \R^2:\, s<t\} \times [0, \infty)$.
	In the following, we give an introduction to the point process representation in the form given in \cite{BP21}. We will additionally use the invariance of \eqref{Equation: Definition of path measures} under replacing $v$ by $v_\varepsilon(x) \coloneqq v(x) + \varepsilon$ for some $\varepsilon \geq 0$. While this transformation does not change the measure $\mathbb P_{\alpha, T}$, we will obtain different point process representations depending on the choice of $\varepsilon$.
	Let $\Gamma_{\alpha, T}$ be the distribution of a Poisson point process on $\triangle \coloneqq \{(s, t)\in \R^2:\, s<t\}$ with intensity measure
	\begin{equation*}
	\mu_{\alpha, T}(\mathrm ds \mathrm dt) \coloneqq \alpha g(t-s) \1_{\{0 \leq s<t\leq T\}}\mathrm ds \mathrm dt.
	\end{equation*}
	By expanding the exponential into a series and interchanging the order of summation and integration, we obtain for any $A\in \mathcal B(C([0, \infty), \R^d))$ and any $\varepsilon\geq 0$
	\begin{align}
	\label{Equation: Paths measure by PPP}
	&\mathbb P_{\alpha, T}(A) = \frac{1}{Z_{\alpha, T}^\varepsilon} \int_{A}\mathcal W(\mathrm d \mathbf x) \, \exp \left(\alpha \iint_{0\leq s<t\leq T}\mathrm ds \mathrm dt \, g(t-s) v_\varepsilon\big(\mathbf x_{s, t}\big)  \right) \nonumber \\ \nonumber
	&= \frac{1}{Z_{\alpha, T}^\varepsilon} \sum_{n=0}^\infty \frac{1}{n!} \int_{\triangle^n} \mu_{\alpha, T}^{\otimes n}(\mathrm d s_1 \mathrm d t_1, \hdots, \mathrm d s_n \mathrm d t_n) \int_A\mathcal W(\mathrm d\mathbf x) \prod_{i=1}^{n}v_\varepsilon\big(\mathbf x_{s_i, t_i}\big) \nonumber \\
	&= \frac{e^{c_{\alpha, T}}}{Z_{\alpha, T}^\varepsilon} \int_{\mathbf N_f(\triangle)} \Gamma_{\alpha, T}(\mathrm d\xi) \int_A \mathcal W( \mathrm d \mathbf x) \prod_{(s, t)\in \operatorname{supp}(\xi)  } v_\varepsilon\big(\mathbf x_{s, t}\big)
	\end{align}
	where $\mathbf N_f(\triangle)$ is the set of finite integer valued measures on $\triangle$. We will view $\xi = \sum_{i=1}^n \delta_{(s_i, t_i)}\in \mathbf N_f(\triangle)$ as a collection of intervals $\{[s_i, t_i]: 1\leq i \leq n\}$. The measure $\Gamma_{\alpha, T}$ can be interpreted in the following way: Consider a
	$M/G/\infty$-queue started empty at time $0$ where the arrival process $\sum_{n=0}^\infty \delta_{s_n}$ is a homogeneous Poisson process with rate $\alpha$, and 
	the service times $(\tau_n)_n$ are iid with density $g$ and are independent of the arrival process. Consider the process $\eta = \sum_{n=1}^\infty \delta_{(s_n, t_n)}$ where $t_n \coloneqq s_n + \tau_n$ is for $n\in \N$ the departure of customer $n$. Then the process off all customers in $\eta$ that arrive and depart before $T$ has distribution $\Gamma_{\alpha, T}$.	
 	For $\xi \in \mathbf N_f(\triangle)$ we define 
	\begin{equation*}
			F_\varepsilon(\xi) \coloneqq \mathbb E_\mathcal W\Big[\prod_{(s, t)\in \operatorname{supp}(\xi)  }v_\varepsilon(X_{s_i, t_i}) \Big]
	\end{equation*}
	and the perturbed point process $\widehat{\Gamma}_{\alpha, T}^\varepsilon$ by
	\begin{equation*}
		\widehat{\Gamma}_{\alpha, T}^\varepsilon(\mathrm d\xi) \coloneqq \frac{e^{c_{\alpha, T}}}{Z_{\alpha, T}^\varepsilon} F_\varepsilon(\xi) \Gamma_{\alpha, T}(\mathrm d\xi).
	\end{equation*}
	Then Equation \eqref{Equation: Paths measure by PPP} yields the representation  $\mathbb P_{\alpha, T}(\cdot) = \int \widehat{\Gamma}_{\alpha, T}^\varepsilon(\mathrm d \xi) \mathbf P^\varepsilon_\xi(\cdot)$ of $\mathbb P_{\alpha, T}$ as a mixture of the pertubed path measures 
	\begin{equation*}
	\mathbf P^\varepsilon_\xi(\mathrm d \mathbf x) \coloneqq \frac{1}{F_\varepsilon(\xi)} \prod_{(s, t)\in \operatorname{supp}(\xi)}v_\varepsilon(\mathbf x_{s_i, t_i}) \, \mathcal W(\mathrm d \mathbf x).
	\end{equation*}
	The function $F_\varepsilon$ depends in an intricate manner on the configuration $\xi$, depending on number, length and relative position of the intervals. Nevertheless, the perturbed measure $\widehat{\Gamma}^\varepsilon_{\alpha, T}$
can still be interpreted as a queuing process by identifying $(s,t) \in \operatorname{supp}(\xi)$ with a service 
interval $[s,t]$ as indicated above. 		
	Under the given assumptions on $g$ and $v$, $\widehat{\Gamma}^\varepsilon_{\alpha, T}$ can then be expressed in terms of an iid sequence of ``clusters'' of overlapping intervals, separated by exponentially distributed dormant periods not covered by any interval. The processes of increments can be drawn independently along these dormant periods and clusters (according to the kernel $(\xi, A) \mapsto \mathbf P^\varepsilon_\xi(A)$).	
	This yields the existence of the infinite volume limit as well as the functional central limit theorem \cite{BP21}, \cite{BMPV22}. In order to make the connection to the representation used in \cite{MV19}, we notice that for all $x\in \R^d \setminus \{0\}$
	\begin{align*}
	v_\varepsilon(x) = 1/|x|^\gamma + \varepsilon = \int_{[0, \infty)} \nu_\varepsilon(\mathrm du) e^{-u^2 |x|^2/2}
	\end{align*}
	where 
	\begin{equation*}
	\nu_\varepsilon(\mathrm du) = \frac{2^{(2-\gamma)/2}}{\Gamma(\gamma/2)} u^{\gamma-1} \mathrm du + \varepsilon \delta_0(\mathrm du).
	\end{equation*}
	It will turn out to be useful to make the atoms of out point process distinguishable.
	We will abuse notation and identify probability measures on $\mathbf N_f(\triangle)$ with symmetric probability measures on $\triangle^\cup \coloneqq \bigcup_{n=0}^\infty \triangle^n$.
	With Equation \eqref{Equation: Paths measure by PPP}, we obtain
	\begin{align*}
		\mathbb P_{\alpha, T}(A) = \frac{e^{c_{\alpha, T}}}{Z_{\alpha, T}^\varepsilon} \int_{\triangle^\cup} \Gamma_{\alpha, T}(\mathrm d\xi) \int_{[0, \infty)^{N(\xi)}} \nu_\varepsilon^{\otimes n}(\mathrm du) \int_A \mathcal W( \mathrm d \mathbf x) e^{-\sum_{i=1}^{N(\xi)} u_i^2 |\mathbf x_{s_i, t_i}|^2/2}
	\end{align*}
	where $N \coloneqq \sum_{n=0}^\infty n\cdot \1_{\triangle^n}$.
	We define for $\xi = ((s_i, t_i))_{1\leq i \leq n} \in \triangle^n$ and $u \in [0, \infty)^n$ the Gaussian measures 
	\begin{equation*}
		\mathbf P_{\xi, u}(\mathrm d\mathbf x) \coloneqq \frac{1}{\phi(\xi, u)} e^{-\sum_{i=1}^n u_i^2 |\mathbf x_{s_i, t_i}|^2/2} \, \mathcal W(\mathrm d \mathbf x) \\
	\end{equation*}
	where $\phi(\xi, u) \coloneqq \mathbb E_\mathcal W\big[e^{-\sum_{i=1}^n u_i^2 |X_{s_i, t_i}|^2/2}\big]$ and, for $\xi\in \triangle^n$ with\footnote{By finiteness of the partition function, we have $F_\varepsilon(\xi) < \infty$ for $\Gamma_{\alpha, T}$ almost all $\xi$} $F_\varepsilon(\xi) < \infty$ the measures
	\begin{equation*}
		\kappa_\varepsilon(\xi, \mathrm du) \coloneqq \frac{\phi(\xi, u)}{F_\varepsilon(\xi)} \nu_\varepsilon^{\otimes n}(\mathrm du)
	\end{equation*}
	and obtain
	\begin{equation}
	\label{Equation: Point process representation of measure}
		\mathbb P_{\alpha, T}(\cdot) = \int_{\triangle^\cup} \widehat{\Gamma}_{\alpha, T}^\varepsilon(\mathrm d\xi) \int_{[0, \infty)^{N(\xi)}} \kappa_\varepsilon(\xi, \mathrm du) \, \mathbf P_{\xi, u}(\cdot).
	\end{equation}
	We will view $(s, t, u) \in \triangle \times [0, \infty)$ as an interval $[s, t]$ equipped with the mark $u$. 
	In contrast to the representation in \cite{MV19}, we draw a sample of the mixing measure $\widehat{\Theta}_{\alpha, T}$ (which is the distribution of a point process on $\triangle \times [0, \infty)$) in two steps: First, draw a sample $\xi$ according to $\widehat{\Gamma}_{\alpha, T}^0$ and then draw marks $u$ according to the kernel $\kappa_0(\xi, \cdot)$. This added flexibility will be useful later.
	If we define 
	\begin{equation*}
		\sigma^2_T(\xi, u) \coloneqq  \frac{1}{dT}\mathbb E_{\mathbf P_{\xi, u}}[|X_{0, T}|^2]
	\end{equation*}
	we get with the previous considerations
	\begin{equation}
	\label{Equation: Point process representation of variance}
		\frac{1}{dT}\mathbb E_{\alpha, T}[|X_{0, T}|^2]= \int_{\triangle^\cup} \widehat{\Gamma}_{\alpha, T}^\varepsilon(\mathrm d\xi) \int_{[0, \infty)^{N(\xi)}} \kappa_\varepsilon(\xi, \mathrm du) \, \sigma^2_T(\xi, u).
	\end{equation}
	Taking the limit $T\to \infty$ in \eqref{Equation: Point process representation of variance} then yields the diffusion constant i.e. the inverse of the effective mass. One can also express the pertubed measure $\widehat{\Gamma}_{\alpha, T}^\varepsilon$ in terms of the path measures $\mathbb P_{\alpha, T}$ (a similar calculation was already made in \cite[(4.18)]{MV21}). The Laplace transform of $\widehat{\Gamma}^\varepsilon_{\alpha, T}$ evaluated at the measurable function $f: \triangle \to [0, \infty)$ is
	\begin{align*}
	\int_{\mathbf N_f(\triangle)} \widehat{\Gamma}_{\alpha, T}^\varepsilon(\mathrm d\xi) e^{-\int_\triangle f \mathrm d \xi}
	&= \frac{1}{Z_{\alpha, T}^\varepsilon} \sum_{n=0}^\infty \frac{1}{n!} \int_{\triangle^n} \mu_{\alpha, T}^{\otimes n}(\mathrm ds \mathrm dt) \mathbb E_\mathcal W\Big[ \prod_{i=1}^{n} v_\varepsilon(X_{s_i, t_i}) e^{-f(s_i, t_i)}\Big] \\
	&= \frac{1}{Z_{\alpha, T}^\varepsilon} \mathbb E_\mathcal W\bigg[ \sum_{n=0}^\infty \frac{1}{n!} \bigg(\int_{\triangle} \mu_{\alpha, T}(\mathrm ds \mathrm dt)  v_\varepsilon(X_{s, t}) e^{-f(s, t)} \bigg)^n \bigg] \\
	&= \frac{1}{Z_{\alpha, T}^\varepsilon} \mathbb E_\mathcal W\bigg[\exp\bigg(\int_{\triangle} \mu_{\alpha, T}(\mathrm ds \mathrm dt)  v_\varepsilon(X_{s, t}) e^{-f(s, t)} \bigg) \bigg] \\
	&= \mathbb E_{\alpha, T}\bigg[\exp\bigg(\int_{\triangle} \mu_{\alpha, T}(\mathrm ds \mathrm dt)  v_\varepsilon(X_{s, t}) \big(e^{-f(s, t)}-1\big) \bigg) \bigg].
	\end{align*}
	That is, $\widehat{\Gamma}_{\alpha, T}^\varepsilon$ is the distribution of a Cox process\footnote{I.e. $\widehat{\Gamma}^\varepsilon_{\alpha, T}$ is the distribution of a point process that is conditionally on $(X_t)_t \sim \mathbb P_{\alpha, T}$ a Poisson point process with intensity measure  $\mu_{\alpha, T}(\mathrm ds \mathrm dt) v_\varepsilon(X_{s, t})$} with driving measure $\mu_{\alpha, T}(\mathrm ds \mathrm dt) v_\varepsilon(X_{s, t})$ with 
	$(X_t)_{t} \sim \mathbb P_{\alpha, T}$. We introduced the parameter $\varepsilon$ in order to add a component to $\widehat{\Gamma}_{\alpha, T}^\varepsilon$ which is far easier to understand than the whole process: notice that we can write $\widehat{\Gamma}_{\alpha, T}^\varepsilon$ as the distribution of the sum $\eta_{\alpha, T} + \eta_{\alpha, T}^\varepsilon$ where $\eta_{\alpha, T} \sim \widehat{\Gamma}_{\alpha, T}^0$ and where the Poisson point process $\eta_{\alpha, T}^\varepsilon \sim \Gamma_{\varepsilon \alpha, T}$ is independent of $\eta_{\alpha, T}$. Notice also that the distribution of marks of $\eta_{\alpha, T}^\varepsilon$ under $\kappa_\varepsilon$ still depends on the whole process.
	We will prove Theorem \ref{Theorem: Our result} in three steps:
	\begin{enumerate}
		\item We identify configurations $(\xi, u)$ for which $\sigma_T^2(\xi, u)$ is small. We will see that $\sigma^2_T(\xi, u)$ is small if there exists some subconfiguration $(\xi', u')$ of $(\xi, u)$ (obtained by deletion of marked intervals) such that $\xi'$ consists of pairwise disjoint intervals which cover large parts of $[0, T]$ and have lengths exceeding 1 and marks exceeding some large $C(\alpha)$. While not necessary\footnote{Lemma \ref{Lemma: sigmaT for partition} later in the text can also be shown directly by using independence of Brownian increments} for the proof of Theorem \ref{Theorem: Our result}, we give a characterization of $\sigma^2_T(\xi,u)$ in terms of a $L^2$-distance which emphasizes the variational type flavor of our approach and might be useful for further refining the method.
		\item For some $C(\alpha)$ (to be determined later), we estimate the kernel $\kappa_\varepsilon$ by a product kernel that marks intervals independent of each other with a mark in $\{0, C(\alpha)\}$. In particular, the product kernel allows us to mark $\eta_{\alpha, T}^\varepsilon$ without knowledge of the full process $\eta_{\alpha, T} + \eta_{\alpha, T}^\varepsilon$.
		\item We mark $\eta_{\alpha, T}^\varepsilon$ with this product kernel and obtain a Poisson point process on $\triangle \times [0, \infty)$. Thinning this process by only considering all marked intervals whose length exceeds 1 and that have the mark $C(\alpha) = \alpha^{1/(2+d)}$ yields Theorem \ref{Theorem: Our result}.
	\end{enumerate}
	
	\begin{remark}
		\label{Remark: Lowee bound effective mass}
	For the Fröhlich Polaron, i.e. for $d=3$, $\gamma =1$ and $g(t) = e^{-t}$ for all $t\geq 0$, the point process representation implies the existence of some constant $\tilde c>0$ such that $\sigma^2(\alpha) \geq e^{-\tilde c \alpha^2}$ for all reasonably large $\alpha>0$. Given $\xi \in \mathbf N_f(\triangle)$ and $r\in \R$ let $N_r(\xi) \coloneqq \xi(\{(s,t): s\leq r \leq t\})$ be the number of intervals that contain $r$. We call $t$ dormant under $\xi$, if $N_t(\xi) = 0$, else we call $t$ active under $\xi$. Under $\xi$, the interval $[0, T]$ can be decomposed into alternating dormant and active periods. Let
		\begin{equation*}
		\mathcal D_T(\xi) \coloneqq \lambda(\{t \in [0, T]: N_t(\xi) = 0\})
		\end{equation*}
		(where $\lambda$ is the Lebesgue measure) be the sum of lengths of all dormant periods in $[0, T]$. One can easily convince oneself that
		$
		\sigma^2_T(\xi, u) \geq \frac{1}{T}\mathcal D_T(\xi)
		$
		for all $u\in [0, \infty)^n$  (this follows from Proposition \ref{Proposition: Point process representation of diffusion constant} later in the text as well). It was shown in \cite{BMPV22} that
		\begin{equation*}
		\liminf_{T \to \infty} \frac{1}{T}\int_0^T \widehat \Gamma_{\alpha, T}^0(N_t = 0) \, \mathrm dt =  \liminf_{T \to \infty} \frac{1}{T}\int_0^T \frac{Z_{\alpha, t} Z_{\alpha, T-t}}{Z_{\alpha, T}} \, \mathrm dt \geq e^{-\tfrac{3}{2} \alpha \psi'(\alpha)}
		\end{equation*}
		where $\psi(\alpha) \coloneqq \lim_{T\to \infty} \log(Z_{\alpha, T})/T= - E_\alpha(0)$ is minus the ground state energy of the total momentum zero Hamiltonian. By Fubinis Theorem
		\begin{equation*}
		\int_0^T \widehat \Gamma_{\alpha, T}^0(N_t = 0) \, \mathrm dt =  \mathbb E_{\widehat \Gamma^0_{\alpha, T}}[\mathcal D_T] 
		\end{equation*}
		and thus $\sigma^2(\alpha) \geq e^{-\tfrac{3}{2} \alpha \psi'(\alpha)}$ by \eqref{Equation: Point process representation of variance}. By convexity of $\psi$ and since $\lim_{\alpha \to \infty} \psi(\alpha)/\alpha^2 \in (0, \infty)$ we have
		\begin{equation*}
			\psi'(\alpha) \leq \frac{\psi(2\alpha) - \psi(\alpha)}{\alpha} \in \mathcal O(\alpha)
		\end{equation*}
		 and hence there exists some $\tilde c>0$, $\alpha_0 >0$ such that $\sigma^2(\alpha) \geq e^{-\tilde c\alpha^2}$ for all $\alpha \geq \alpha_0$.
	\end{remark}
	
	\section{Properties of $\sigma_T^2(\xi, u)$}
	We derive a few properties of $\sigma_T^2(\xi, u)$. First, we will show that $\sigma_T^2(\xi, \cdot)$ is decreasing with respect to the partial order on $[0, \infty)^{N(\xi)}$ defined by $u\leq \tilde u$ iff $u_i\leq \tilde u_i$ for all $1\leq i \leq N(\xi)$. We will then express $\sigma_T^2(\xi, u)$ in terms of a $L^2$-distance and calculate $\sigma_T^2(\xi, u)$ in case that $\xi$ consists of pairwise disjoint intervals.
		\begin{lemma}
		\label{Lemma: sigmaT decreasign in u}
		For fixed $\xi \in \triangle^\cup$, the function $\sigma^2_T(\xi, \cdot)$ is decreasing on $[0, \infty)^{N(\xi)}$.
	\end{lemma}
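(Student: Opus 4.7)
The plan is to show $\sigma_T^2(\xi,\cdot)$ is coordinatewise non-increasing by computing its partial derivatives and verifying they are $\leq 0$. Changing variables to $a_j = u_j^2/2$, we write
\[
dT\cdot \sigma_T^2(\xi, u) \;=\; \mathbb E_{\mathbf P_{\xi,u}}\bigl[|X_{0,T}|^2\bigr] \;=\; \frac{\mathbb E_{\mathcal W}\bigl[|X_{0,T}|^2 \exp\bigl(-\sum_i a_i |X_{s_i,t_i}|^2\bigr)\bigr]}{\mathbb E_{\mathcal W}\bigl[\exp\bigl(-\sum_i a_i |X_{s_i,t_i}|^2\bigr)\bigr]}.
\]
Differentiating in $a_j$ (justified by Gaussian dominated convergence, since moments of Brownian increments are finite and the tilt is bounded) and simplifying the quotient yields the standard identity
\[
\frac{\partial}{\partial a_j} \mathbb E_{\mathbf P_{\xi,u}}\bigl[|X_{0,T}|^2\bigr] \;=\; -\operatorname{Cov}_{\mathbf P_{\xi, u}}\bigl(|X_{0,T}|^2,\, |X_{s_j, t_j}|^2\bigr),
\]
so it suffices to show this covariance is non-negative.

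The key remark is that $\mathbf P_{\xi, u}$ is itself a centered Gaussian measure on path space: its Radon--Nikodym density with respect to $\mathcal W$ is a pure quadratic exponential in the Brownian increments with no linear term, so the tilt preserves the centered Gaussian structure and merely modifies the covariance. Moreover the tilting factor $\exp\bigl(-\tfrac12\sum_i u_i^2|X_{s_i,t_i}|^2\bigr)$ is $O(d)$-invariant, hence the $d$ Cartesian coordinates $X^{(1)},\dots,X^{(d)}$ remain independent and identically distributed under $\mathbf P_{\xi, u}$. This decomposes the covariance of the squared norms as
\[
\operatorname{Cov}_{\mathbf P_{\xi, u}}\bigl(|X_{0,T}|^2,\, |X_{s_j, t_j}|^2\bigr) \;=\; \sum_{k=1}^d \operatorname{Cov}_{\mathbf P_{\xi, u}}\bigl((X^{(k)}_{0,T})^2,\, (X^{(k)}_{s_j, t_j})^2\bigr),
\]
and Isserlis' theorem applied to the centered Gaussian pair $(X^{(k)}_{0,T}, X^{(k)}_{s_j,t_j})$ gives each summand as $2\operatorname{Cov}_{\mathbf P_{\xi,u}}\bigl(X^{(k)}_{0,T}, X^{(k)}_{s_j,t_j}\bigr)^2 \geq 0$. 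Therefore $\partial_{a_j}\sigma_T^2 \leq 0$, and since $\partial_{u_j}\sigma_T^2 = u_j\partial_{a_j}\sigma_T^2$ with $u_j \geq 0$, we get $\partial_{u_j}\sigma_T^2 \leq 0$, proving the monotonicity claim.

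The only substantive observation is the identification of $\mathbf P_{\xi, u}$ as a centered Gaussian measure, which makes Wick's formula available and yields non-negativity of $\operatorname{Cov}(X^2, Y^2)$ for free; once this is noted, no delicate estimates are required, and the argument is uniform in $\xi$ making no use of the specific geometry of the intervals.
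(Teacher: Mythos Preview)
Your proof is correct and follows essentially the same approach as the paper: differentiate the tilted expectation to obtain $-u_i\operatorname{Cov}_{\mathbf P_{\xi,u}}(|X_{0,T}|^2,|X_{s_i,t_i}|^2)$, then use that $\mathbf P_{\xi,u}$ is centered Gaussian with i.i.d.\ coordinates and apply Isserlis' theorem to conclude the covariance is non-negative. The only cosmetic difference is your intermediate substitution $a_j=u_j^2/2$.
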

	\begin{proof}
		Let $ \xi = ((s_i, t_i))_{1\leq i \leq n} \in \triangle^n$. Then
		\begin{align*}
			\frac{\partial \sigma^2_T(\xi, u)}{\partial u_i} &= \frac{\partial }{\partial u_i} \frac{\mathbb E_\mathcal W\big[|X_{0, T}|^2 e^{-\sum_{j=1}^{n} u_j^2 |X_{s_j, t_j}|^2/2}\big]}{\mathbb E_\mathcal W\big[e^{-\sum_{j=1}^{n} u_j^2 |X_{s_j, t_j}|^2/2}\big]} \\
			&=-u_i \operatorname{Cov}_{\mathbf P_{\xi, u}}(|X_{0, T}|^2,|X_{s_i, t_i}|^2).
		\end{align*}
		By Isserlis' theorem (and as the coordinate processes $X^1, \hdots, X^d$ are iid under the centered Gaussian measure $\mathbf P_{\xi, u}$)
		\begin{align*}
			\operatorname{Cov}_{\mathbf P_{\xi, u}}(|X_{0, T}|^2,|X_{s_i, t_i}|^2) &= d \cdot \operatorname{Cov}_{\mathbf P_{\xi, u}}((X_{0, T}^1)^2,(X_{s_i, t_i}^1)^2) \\
			&= 2d\cdot\mathbb E_{\mathbf P_{\xi, u}}\big[X_{0, T}^1 X_{s_i, t_i}^1 \big]^2 \geq 0. \qedhere
		\end{align*}
	\end{proof}
	In particular, $\sigma^2_{T}(\xi, u)$ is increasing under deletion of a marked interval $(s_i, t_i, u_i)$ (set $u_i = 0$).
	As already used in \cite{MV19}, the normalization constant $\phi(\xi, u)$ can also be expressed in terms of a determinant of the covariance matrix of a suitable Gaussian vector.
	Let $(B_t)_{t\geq 0}$ be an one dimensional Brownian motion and $(Z_n)_n$ be an iid sequence of $\mathcal N(0, 1)$ distributed random variables, independent of $(B_t)_{t\geq 0}$. Then, by Lemma \ref{Lemma: phi as a determiant} of the appendix, we have
	\begin{equation*}
	\phi(\xi, u) = \frac{1}{\det( C(\xi, u))^{d/2}}
	\end{equation*}
	where $C(\xi, u)$ is the covariance matrix of the Gaussian vector $(u_1 B_{s_1, t_1} + Z_1, \hdots, u_n B_{s_n, t_n} + Z_n)$.
	\begin{prop}
		\label{Proposition: Point process representation of diffusion constant}
		The diffusion constant has the representation
		\begin{equation*}
		\sigma^2(\alpha)= \lim_{T \to \infty} \int_{\triangle^\cup} \widehat{\Gamma}_{\alpha, T}^\varepsilon(\mathrm d\xi) \int_{[0, \infty)^{N(\xi)}} \kappa_\varepsilon(\xi, \mathrm du) \, \sigma^2_T(\xi, u)
		\end{equation*}
		where for $\xi = ((s_i, t_i))_{1\leq i \leq n} \in \triangle^n$ and $u\in [0, \infty)^n$
		\begin{equation}
		\label{Equation: Variance in terms of L2 distance}
			\sigma^2_T(\xi, u)
			= \frac{1}{T}\operatorname{dist}_{L^2}\Big(B_{0, T}, \operatorname{span}\{u_i B_{s_i, t_i} + Z_i: \, 1\leq i \leq n\}\Big)^2.
		\end{equation}
	\end{prop}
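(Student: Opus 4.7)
My plan is the following. The formula for $\sigma^2(\alpha)$ is immediate: it combines the diagonal-limit identity $\sigma^2(\alpha)=\lim_{T\to\infty}\tfrac{1}{dT}\mathbb E_{\alpha,T}[|X_{0,T}|^2]$ recalled just above the theorem with the point process representation \eqref{Equation: Point process representation of variance}. The substance of the proposition therefore lies in the $L^2$-distance formula for $\sigma_T^2(\xi,u)$.

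I would first reduce to dimension one. Since $e^{-\sum_i u_i^2|\mathbf x_{s_i,t_i}|^2/2}$ factorises as a product over the $d$ coordinates and Brownian motion has independent components, the coordinate processes $X^1,\dots,X^d$ remain i.i.d.\ under $\mathbf P_{\xi,u}$. Writing $\mathbf P^{(1)}_{\xi,u}$ for their common one-dimensional law, the task reduces to showing
\begin{equation*}
\mathbb E_{\mathbf P^{(1)}_{\xi,u}}[B_{0,T}^2]=\operatorname{dist}_{L^2}\bigl(B_{0,T},\operatorname{span}\{u_iB_{s_i,t_i}+Z_i:1\leq i\leq n\}\bigr)^2,
\end{equation*}
where the right-hand side is evaluated in a reference Gaussian space carrying a one-dimensional Brownian motion $B$ and independent $\mathcal N(0,1)$ variables $Z_1,\dots,Z_n$.

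The key idea is to realise $\mathbf P^{(1)}_{\xi,u}$ as a Gaussian conditional law. On the reference space, set $Y_i\coloneqq u_iB_{s_i,t_i}+Z_i$. Conditionally on $B$, the vector $Y$ has Lebesgue density at $0$ equal to $(2\pi)^{-n/2}\exp(-\sum_i u_i^2 B_{s_i,t_i}^2/2)$; an application of Bayes' formula then identifies the conditional law of $B$ given $\{Y=0\}$ with $\mathbf P^{(1)}_{\xi,u}$. Since $(B_{0,T},Y_1,\dots,Y_n)$ is centered and jointly Gaussian, the regression $\mathbb E[B_{0,T}\mid Y]$ equals the orthogonal $L^2$-projection of $B_{0,T}$ onto $\operatorname{span}\{Y_1,\dots,Y_n\}$ and in particular vanishes at $Y=0$. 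The associated conditional variance is therefore exactly the squared $L^2$-distance on the right-hand side, yielding \eqref{Equation: Variance in terms of L2 distance} after dividing by $T$.

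The only mildly delicate point is the disintegration of the reference Gaussian measure along the null event $\{Y=0\}$, but this is standard because the marginal law of $Y$ is non-degenerate Gaussian with a continuous density strictly positive at the origin. The projection identity for jointly Gaussian vectors is classical, so I do not foresee a substantive obstacle; the determinantal formula for $\phi(\xi,u)$ quoted from the appendix may be invoked if one prefers to phrase the same computation without conditioning.
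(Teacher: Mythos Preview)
Your argument is correct, and it takes a genuinely different route from the paper's. The paper proceeds computationally: it writes $T\sigma_T^2(\xi,u)$ as a derivative in an auxiliary variable $v$ of the Gaussian integral $\mathbb E_{\mathcal W}[e^{-\tfrac12 v|X_{0,T}|^2-\sum_i\tfrac12 u_i^2|X_{s_i,t_i}|^2}]$, invokes the determinantal formula of Lemma~\ref{Lemma: phi as a determiant} to rewrite this as a derivative of $\det(C_T(\xi,u,v,1))^{-d/2}$, exploits multilinearity of the determinant to compute the derivative, and finally applies Lemma~\ref{Lemma: Determiant in terms of L2 distance} to turn the resulting ratio of determinants into the squared $L^2$-distance. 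Your approach bypasses all of this by recognising $\mathbf P^{(1)}_{\xi,u}$ as the regular conditional law of Brownian motion given $\{u_iB_{s_i,t_i}+Z_i=0:\,1\leq i\leq n\}$ and then invoking the standard Gaussian regression identity that the conditional variance equals the squared $L^2$-distance to the span of the conditioning variables. This is more conceptual and shorter; it also makes transparent why the $Z_i$ appear at all (they are the noise that turns exponential tilting into conditioning). The paper's determinantal route, on the other hand, keeps the appendix lemmas in play, and those same lemmas are reused later (e.g.\ Corollary~\ref{Corollary: Factor out of phi} in the estimation of $\kappa_\varepsilon$), so from the paper's perspective there is some economy in staying within that framework.
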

		\begin{proof}
		It is left to show Equality \eqref{Equation: Variance in terms of L2 distance}. We have
		\begin{equation}
		\label{Equation: sigma2(xi, u) as derivative}
		T \sigma^2_T(\xi, u) = -\frac{2/d}{\phi(\xi, u)}\frac{\partial}{ \partial v } \mathbb E_\mathcal W\Big[ e^{- \tfrac{1}{2} v |X_{0, T}|^2- \sum_{i=1}^n \tfrac{1}{2} u_i^2 |X_{s_i, t_i}|^2 } \Big] \Big|_{v=0}.
		\end{equation}
		For $v, w \in \R$ let $C_T(\xi, u, v, w) \in \R^{(n+1)\times (n+1)}$ be the covariance matrix of the Gaussian vector $(u_1 B_{s_1, t_1} + Z_1, \hdots, u_n B_{s_n, t_n} + Z_n, \sqrt{v}B_{0, T} + w Z_{n+1})$. Then, with Lemma \ref{Lemma: phi as a determiant}, Equation \eqref{Equation: sigma2(xi, u) as derivative} becomes 
		\begin{align*}	
		T \sigma_T^2(\xi, u) &= - \frac{2}{d} \det(C(\xi, u))^{d/2} \frac{\partial}{ \partial v }\frac{1}{\det(C_T(\xi, u, v, 1))^{d/2}}\Big|_{v=0} \\
		&= \frac{1}{\det(C(\xi, u))} \frac{\partial}{ \partial v } \det(C_T(\xi, u, v, 1)) \Big|_{v=0}
		\end{align*}
		since $\det(C_T(\xi, u, 0, 1)) = \det(C(\xi, u))$. By using that the determinant is multilinear in the rows and columns, we have
		\begin{equation*}
			\det(C_T(\xi, u, v, 1)) = \det(C(\xi, u)) + v\det(C_T(\xi, u, 1, 0))
		\end{equation*}
		and hence
		\begin{equation*}
			\frac{\partial}{ \partial v } \det(C_T(\xi, u, v, 1)) \Big|_{v=0} = \det(C_T(\xi, u, 1, 0)).
		\end{equation*}
		Finally, by Lemma \ref{Lemma: Determiant in terms of L2 distance}
		\begin{equation*}
			\frac{\det(C_T(\xi, u, 1, 0))}{\det(C(\xi, u))} = \operatorname{dist}_{L^2}\Big(B_{0, T},  \operatorname{span}\{u_i B_{s_i, t_i} + Z_i: \, 1\leq i \leq n\}\Big)^2
		\end{equation*}
		which concludes the proof.		
	\end{proof}

	\begin{lemma}
		\label{Lemma: sigmaT for partition}
		Let $\xi = ((s_i, t_i))_{1\leq i \leq n} \in \triangle^n$ with $[s_i, t_i] \subseteq [0, T]$ for all $1\leq i \leq n$ and $[s_i, t_i] \cap [s_j, t_j] = \emptyset$ for $i\neq j$. Then, for $u \in [0, \infty)^n$
		\begin{equation*}
			\sigma_T^2(\xi, u) =  1- \frac{1}{T} \sum_{i=1}^n \tau_i + \frac{1}{T}\sum_{i=1}^n \frac{\tau_i}{\tau_i u_i^2 + 1}
		\end{equation*}
		where $\tau_i \coloneqq t_i - s_i$ for $1\leq i \leq n$.
	\end{lemma}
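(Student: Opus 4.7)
The plan is to avoid the general $L^2$-distance formula of Proposition \ref{Proposition: Point process representation of diffusion constant} and instead exploit the disjointness hypothesis directly via independence of Brownian increments, as suggested by the author's footnote. The key observation is that when the marked intervals $[s_i,t_i]$ are pairwise disjoint and contained in $[0,T]$, the Gaussian measure $\mathbf P_{\xi,u}$ has a product structure with respect to the natural time decomposition.

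First I would order the intervals so that $0 \leq s_1 < t_1 \leq s_2 < t_2 \leq \cdots \leq s_n < t_n \leq T$ and set $I_i \coloneqq [s_i,t_i]$, together with gap intervals $J_0,J_1,\ldots,J_n$ covering $[0,T]\setminus \bigcup_i I_i$. Writing $B_i \coloneqq X_{s_i,t_i}$ and $G_j$ for the increment of $X$ over $J_j$, under Wiener measure $\mathcal W$ all of $\{B_i\}_{i=1}^n \cup \{G_j\}_{j=0}^n$ are independent centered Gaussian vectors in $\R^d$, with $B_i \sim \mathcal N(0,\tau_i I_d)$ and $G_j \sim \mathcal N(0,|J_j| I_d)$. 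The weight $\prod_{i=1}^n e^{-u_i^2 |X_{s_i,t_i}|^2/2}$ factorizes across the $B_i$'s and does not involve the $G_j$'s, so under $\mathbf P_{\xi,u}$ the family $\{B_i\}\cup\{G_j\}$ is still independent, each $G_j$ keeps its law, and each $B_i$ becomes a centered Gaussian with density proportional to $\exp\bigl(-\tfrac12|B_i|^2(1/\tau_i + u_i^2)\bigr)$, i.e.\ $B_i \sim \mathcal N\bigl(0, \tfrac{\tau_i}{1+u_i^2\tau_i} I_d\bigr)$.

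Then I would decompose
\begin{equation*}
X_{0,T} = \sum_{i=1}^n B_i + \sum_{j=0}^n G_j,
\end{equation*}
which is a sum of independent centered vectors under $\mathbf P_{\xi,u}$, so
\begin{equation*}
\mathbb E_{\mathbf P_{\xi,u}}\bigl[|X_{0,T}|^2\bigr] = \sum_{i=1}^n \frac{d\,\tau_i}{1+u_i^2\tau_i} + \sum_{j=0}^n d\,|J_j| = d\sum_{i=1}^n \frac{\tau_i}{1+u_i^2\tau_i} + d\Bigl(T - \sum_{i=1}^n \tau_i\Bigr).
\end{equation*}
Dividing by $dT$ yields the claimed identity.

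There is no genuine obstacle here; the only point requiring a moment's care is the verification that the reweighting preserves independence of the $B_i$'s from each other and from the gap increments, but this is immediate from the product form of the density together with disjointness. As a sanity check, the same formula can be derived from \eqref{Equation: Variance in terms of L2 distance}: the random variables $u_iB_{s_i,t_i}+Z_i$ are orthogonal in $L^2$ (since the $B_{s_i,t_i}$ are independent and the $Z_i$ are independent of them and of each other), each contributes a projection coefficient $\langle B_{0,T}, u_iB_{s_i,t_i}+Z_i\rangle/\|u_iB_{s_i,t_i}+Z_i\|^2 = u_i\tau_i/(u_i^2\tau_i+1)$, and after subtracting $\sum_i u_i^2\tau_i^2/(u_i^2\tau_i+1) = \sum_i(\tau_i - \tau_i/(u_i^2\tau_i+1))$ from $\|B_{0,T}\|^2 = T$ one recovers exactly the right-hand side.
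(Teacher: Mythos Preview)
Your proof is correct. Your primary argument—tilting Wiener measure by a density that factors over the independent increments $B_i=X_{s_i,t_i}$, reading off that each $B_i$ becomes $\mathcal N\bigl(0,\tfrac{\tau_i}{1+u_i^2\tau_i}I_d\bigr)$ while the gap increments are untouched, and summing variances—is exactly the direct route the paper alludes to in its footnote but does not carry out. The paper's actual proof is instead your ``sanity check'': it invokes the $L^2$-distance representation of Proposition~\ref{Proposition: Point process representation of diffusion constant}, observes that the vectors $u_iB_{s_i,t_i}+Z_i$ are pairwise orthogonal, writes down the orthogonal projection of $B_{0,T}$ explicitly, and computes the residual norm. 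Your direct approach is more self-contained (it does not rely on Proposition~\ref{Proposition: Point process representation of diffusion constant}) and arguably more transparent about why the formula holds; the paper's approach has the advantage of illustrating the variational formula in a case where it can be evaluated in closed form, which is the motivation for stating Proposition~\ref{Proposition: Point process representation of diffusion constant} in the first place.
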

	
		\begin{proof}
		We project $B_{0, T}$ onto $\operatorname{span} \{u_iB_{s_i, t_i} + Z_i:\, 1 \leq i \leq n\}$ and obtain
		\begin{align*}
		\sum_{i=1}^n \mathbb E[B_{0, T}\cdot (u_i B_{s_i, t_i} + Z_i)] \frac{u_i B_{s_i, t_i} + Z_i}{\mathbb E[(u_i B_{s_i, t_i} + Z_i)^2]} = \sum_{i=1}^n  \frac{\tau_iu_i}{u_i^2\tau_i + 1} (u_i B_{s_i, t_i} + Z_i).
		\end{align*}
		Hence
		\begin{align*}
		T \sigma^2_T(\xi, u) &=\operatorname{dist}\big(B_{0, T}, \operatorname{span} \{u_iB_{s_i, t_i} + Z_i:\, 1\leq i \leq n\} \big)^2 \\
		&= \mathbb E\Big[\Big(B_{0, T} - \sum_{i=1}^n  B_{s_i, t_i} + \sum_{i=1}^n  \frac{1}{u_i^2\tau_i + 1} B_{s_i, t_i} - \frac{u_i\tau_i}{u_i^2\tau_i + 1}Z_i \Big)^2 \Big] \\
		&= \mathbb E\Big[\Big(B_{0, T} - \sum_{i=1}^n  B_{s_i, t_i} \Big)^2 \Big]  + \sum_{i=1}^n  \frac{\tau_i}{(u_i^2\tau_i + 1)^2} + \frac{u_i^2 \tau_i^2}{(u_i^2\tau_i + 1)^2} \\
		&= T - \sum_{i=1}^n \tau_i + \sum_{i=1}^n  \frac{\tau_i}{\tau_i u_i^2 + 1}. \qedhere
		\end{align*}
	\end{proof}
	
	In the situation above, $1 - \frac{1}{T}\sum_{i=1}^n \tau_i = \frac{1}{T} \lambda\big([0, T]\setminus\bigcup_{i=1}^n[s_i, t_i]\big)$, where $\lambda$ denotes the Lebesgue measure.
	Assume that $\tau_i \geq 1$ and $u_i \geq C$ for all $1\leq i \leq n$ where $C>0$. Then
	\begin{equation}
	\label{Equation: Second errorterm}
		 \frac{1}{T}\sum_{i=1}^n \frac{\tau_i}{\tau_i u_i^2 + 1} \leq \frac{1}{C^2 + 1} \frac{1}{T}\sum_{i=1}^n \tau_i \leq \frac{1}{C^2 + 1}.
	\end{equation}
	For a general configuration $(\xi, u)$, the previous considerations allow us to obtain estimates on $\sigma_{T}^2(\xi, u)$ by considering subconfigurations $(\xi', u')$ of pairwise disjoint marked intervals with intervals lengths exceeding  $1$ and marks exceeding $C$.
	In combination with an application of renewal theory, this implies the following technical Lemma:
	
	\begin{lemma}
		\label{Lemma: Thinned queue covers interval}
		Let $f:[0, \infty) \to [0, \infty)$ be measurable with $\int_0^\infty (1+t)f(t) \, \mathrm dt <\infty$ and $f(t) = 0$ for almost all $t\in [0, 1]$.
		For $T>0$ let $\Xi_T$ be the law of a Poisson point process with intensity measure 
		$
		f(t-s) \1_{\{0 < s <t<T\}} \mathrm ds \mathrm dt
		$ and let $C>0$. Then
		\begin{equation*}
		\limsup_{T \to \infty} \int_{\triangle^\cup} \Xi_T(\mathrm d \xi) \int_{[0, \infty)^{N(\xi)}} \delta_C^{\otimes N(\xi)}(\mathrm du)\, \sigma^2_T(\xi, u) \leq \frac{1}{1+ \int_0^\infty t f(t) \, \mathrm dt} + \frac{1}{C^2 + 1}.
		\end{equation*}
	\end{lemma}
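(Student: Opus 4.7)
The plan is to reduce to Lemma \ref{Lemma: sigmaT for partition} by extracting from $\xi$ a disjoint sub-collection whose total length covers $[0,T]$ as efficiently as possible, and then to analyse that sub-collection via a classical renewal argument.

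\textbf{Reduction.} By Lemma \ref{Lemma: sigmaT decreasign in u}, setting any mark to zero only increases $\sigma_T^2$, so $\sigma_T^2(\xi, (C,\dots,C)) \leq \sigma_T^2(\xi', (C,\dots,C))$ for every sub-configuration $\xi' \subseteq \xi$. Since $f$ vanishes on $[0,1]$, every interval of $\xi$ has length at least $1$. Hence, if $\xi'$ consists of pairwise disjoint intervals, Lemma \ref{Lemma: sigmaT for partition} together with \eqref{Equation: Second errorterm} yields
\begin{equation*}
\sigma_T^2(\xi, (C,\dots,C)) \leq 1 - \frac{1}{T}\sum_{(s,t)\in \xi'}(t-s) + \frac{1}{C^2+1},
\end{equation*}
and it suffices to exhibit such a $\xi'$ whose expected coverage tends to $\frac{m_1}{1+m_1}$, where $m_1 \coloneqq \int_0^\infty t f(t)\,dt$.

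\textbf{Renewal selection.} I would couple $\Xi_T$ with the untruncated Poisson point process $\Pi$ on $\{0<s<t\}$ of intensity $f(t-s)\,ds\,dt$, so that $\Xi_T = \Pi \cap \{t<T\}$. Run the greedy selection on $\Pi$: set $t_0' = 0$ and iteratively let $[s_k', t_k']$ be the interval in $\Pi$ with smallest left endpoint $s_k' > t_{k-1}'$. Let $\xi'$ consist of the selected intervals with $t_k' \leq T$, which then satisfies $\xi' \subseteq \Xi_T$. Writing $\lambda \coloneqq \int_0^\infty f(r)\,dr$ (finite by assumption), the $s$-projection of $\Pi$ is a rate-$\lambda$ Poisson process on $(0,\infty)$ and, conditionally on the $s$-values, the lengths are i.i.d.\ with density $f/\lambda$. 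By the strong Markov property applied at the stopping time $t_{k-1}'$, the gaps $X_k \coloneqq s_k' - t_{k-1}'$ and the lengths $\tau_k' \coloneqq t_k' - s_k'$ form two independent i.i.d.\ sequences with $X_k \sim \operatorname{Exp}(\lambda)$ and $\tau_k' \sim f/\lambda$. Hence $(t_k')_k$ is a renewal process with inter-renewal mean $(1+m_1)/\lambda$.

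\textbf{Passage to the limit.} The strong law of large numbers for renewal processes yields
\begin{equation*}
\frac{1}{T}\sum_{k:\,t_k'\leq T}\tau_k' \xrightarrow{T\to\infty} \frac{\mathbb{E}[\tau_1']}{\mathbb{E}[X_1+\tau_1']} = \frac{m_1}{1+m_1} \quad \text{a.s.}
\end{equation*}
Since the left-hand side is bounded by $1$, bounded convergence gives the same limit in $L^1$. Combined with the reduction step,
\begin{equation*}
\limsup_{T\to\infty}\int \Xi_T(d\xi)\int \delta_C^{\otimes N(\xi)}(du)\,\sigma_T^2(\xi, u) \leq 1 - \frac{m_1}{1+m_1} + \frac{1}{C^2+1} = \frac{1}{1+m_1}+\frac{1}{C^2+1},
\end{equation*}
which is the claim.

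The main technical subtlety is the boundary truncation $t<T$ in $\Xi_T$: a naive greedy selection applied directly to $\Xi_T$ would see a length distribution biased near $s=T$ and so lose the exact renewal structure. The coupling with the untruncated $\Pi$ is exactly what sidesteps this, at the negligible cost—thanks to $\int (1+t)f(t)\,dt < \infty$—of discarding at most one selected interval whose right endpoint exceeds $T$, whose expected length is $O(1)$ and therefore contributes only $o(T)$ to the coverage.
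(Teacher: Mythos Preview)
Your proof is correct and follows essentially the same route as the paper's: both extract the same greedy disjoint sub-collection from the (untruncated) $M/G/\infty$ queue, bound $\sigma_T^2$ via Lemma~\ref{Lemma: sigmaT for partition} and \eqref{Equation: Second errorterm}, and evaluate the limiting coverage by a renewal/SLLN argument. The only cosmetic difference is that the paper tracks the uncovered gaps $s_{i_n}-t_{i_{n-1}}$ together with the backward recurrence time $B_T$, whereas you track the covered lengths $\tau_k'$ directly via the renewal reward theorem.
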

	\begin{proof}
		In case that $f=0$ a.e. the statement is trivial, so assume $\int_0^\infty f(t) \, \mathrm dt >0$. Let
		\begin{equation*}
		\beta \coloneqq \int_{0}^{\infty} f(r) \mathrm dr, \quad \rho(t) \coloneqq \frac{f(t)}{\int_{0}^{\infty} f(r) \mathrm dr}
		\end{equation*}
		for $t\geq 0$. We consider the Poisson point process $\eta = \sum_{n=1}^\infty \delta_{(s_n, t_n)}$ of a $M/G/\infty$-queue where the arrival process $\sum_{n=1}^\infty \delta_{s_n}$ is a homogeous Poisson point process with intensity $\beta$, the service times $(\tau_n)_n$ are iid, have density $\rho$ and are independent of the arrival process and $t_n \coloneqq s_n + \tau_n$ is for $n\in \N$ the departure of customer $n$.
		Then the restriction of $\eta$ to the process of all customers that arrive and depart before $T$ has distribution $\Xi_T$. We inductively define $i_0 \coloneqq 0$, $t_0 \coloneqq 0$ and
		\begin{equation*}
		i_{n+1} \coloneqq \inf \{j > i_n: s_{j} > t_{i_n}\}
		\end{equation*}
		for $n\geq 0$, i.e. customer $i_{n+1}$ is for $n\geq 1$ the first customer that arrives after the departure of customer $i_n$. The waiting times $(s_{i_{n}} - t_{i_{n-1}})_{n\geq 1}$ are iid $\operatorname{Exp}(\beta)$ distributed and are independent of the iid service times $(t_{i_n} - s_{i_n})_{n\geq 1}$ which have density $\rho$. Notice that $(t_{i_n})_{n\geq 0}$ defines a renewal process. Let $N_T \coloneqq \inf\{n: t_{i_n} \geq T\}$. By the considerations after Lemma \ref{Lemma: sigmaT for partition}, we have
		\begin{equation*}
		\int_{\triangle^\cup} \Xi_{T}(\mathrm d \xi) \int_{[0, \infty)^{N(\xi)}} \delta_C^{\otimes N(\xi)}(\mathrm du)\, \sigma^2_T(\xi, u) \leq \mathbb E\Big[\frac{B_T}{T} + \frac{1}{T}\sum_{n=1}^{N_T-1} s_{i_n} - t_{i_{n-1}} \Big] + \frac{1}{C^2 +1}
		\end{equation*}
		where $t_0 \coloneqq 0$ and $B_T \coloneqq T - t_{i_{N_T-1}}$ is the backward recurrence time of the renewal process $(t_{i_n})_{n}$ at time $T$. By renewal theory, $(B_T)_{T\geq 0}$ converges in distribution as $T\to \infty$. Let $(T_k)_k$ be a sequence in $[0, \infty)$ that converges to infinity. Then $B_{T_k}/T_k \to 0$ in probability as $k\to \infty$. Hence, there exists a subsequence $(T_{k_j})_j$ such that $B_{T_{k_j}}/T_{k_j} \to 0$ almost surely as $j\to \infty$. By dominated convergence, $\mathbb E[B_{T_{k_j}}/T_{k_j}] \to 0$ as $j\to \infty$. As $(T_k)_k$ was arbitrary, we have $\mathbb E[B_{T}/T] \to 0$ as $T\to\infty$. By renewal theory
		\begin{equation}
		\label{Equation: Number of renewal points}
		N_T/T \to \frac{1}{\mathbb E[t_{i_1}]} = \frac{1}{\beta^{-1} + \int_0^\infty r \rho(r) \, \mathrm dr}
		\end{equation}
		almost surely as $T\to \infty$ and 	
		by the law of large numbers we thus have
		\begin{equation*}
		\frac{1}{T}\sum_{n=1}^{N_T-1} s_{i_n} - t_{i_{n-1}} \to  \frac{\beta^{-1}}{\beta^{-1} + \int_0^\infty r \rho(r) \, \mathrm dr} = \frac{1}{1 + \int_0^\infty r f(r) \, \mathrm dr}
		\end{equation*}
		almost surely as $T\to \infty$. By the dominated convergence theorem, we have convergence in $L^1$ as well and the statement follows.
	\end{proof}
	
	\section{Estimation of the kernel $\kappa_\varepsilon$}
	While the component $\eta_{\alpha, T}^\varepsilon \sim \Gamma_{\varepsilon\alpha, T}$ is far easier to understand than the whole process, the distribution of marks of $\eta_{\alpha, T}^\varepsilon$ under $\kappa_\varepsilon$ still depends on the whole process. We get rid of this problem by estimating $\kappa_\varepsilon$ by a suitable product kernel that marks the intervals independent of each other. For $t, C>0$ we define
	\begin{equation*}
	h_\varepsilon(t) \coloneqq \mathbb E_\mathcal W[v_\varepsilon(X_t)] = \int_{[0, \infty)} \nu_\varepsilon(\mathrm du) \frac{1}{(1+u^2t)^{d/2}}
	\end{equation*}
	and
	\begin{equation*}
	\label{Equation: Definiton p(C)}
	p_\varepsilon(C) \coloneqq \frac{2^{-\gamma/2} }{(1+4C^2)^{d/2}}\big(1- \varepsilon/h_\varepsilon(2)\big).
	\end{equation*}
	Notice that there exists some constant $c_{\gamma, d}$ such that $h_\varepsilon(t) = c_{\gamma, d} t^{-\gamma/2} + \varepsilon$ for all $t>0$. In particular, $\varepsilon/h_\varepsilon(2)<1$ and
	\begin{equation*}
	\mathcal P_{\varepsilon, C}(t, \cdot ) \coloneqq
	\begin{cases}
	\delta_0 \quad &\text{ if } t> 2 \\
	p_\varepsilon(C) \delta_C + (1-p_\varepsilon(C))\delta_0 &\text { if } t\leq 2
	\end{cases}
	\end{equation*}
	defines for $t>0$ a probability measure. Fix $\xi = ((s_i, t_i))_{1\leq i\leq N(\xi)} \in \triangle^\cup$ with $F_\varepsilon(\xi)< \infty$.
	We will show that $\kappa_\varepsilon(\xi, \cdot)$ stochastically dominates
	\begin{equation*}
	\tilde \kappa_{\varepsilon, C}(\xi, \cdot) \coloneqq \bigotimes_{i=1}^{N(\xi)} \mathcal P_{\varepsilon, C}(t_i-s_i, \, \cdot \,)
	\end{equation*}
	that is
	\begin{equation*}
		\int_{[0, \infty)^{N(\xi)}} \kappa_\varepsilon(\xi, \mathrm du) f(u) \leq \int_{[0, \infty)^{N(\xi)}} \tilde \kappa_{\varepsilon, C}(\xi, \mathrm du) f(u)
	\end{equation*}
	for any decreasing function $f: [0, \infty)^{N(\xi)} \to \R$.
	By Proposition \ref{Proposition: Point process representation of diffusion constant} and the monotonicity of $\sigma_T^2(\xi, \cdot)$, we then obtain
	\begin{equation*}
	\sigma^2(\alpha) \leq \limsup_{T \to \infty} \int_{\triangle^\cup} \widehat{\Gamma}_{\alpha, T}^\varepsilon(\mathrm d\xi) \int_{\{0, C\}^{N(\xi)}} \tilde \kappa_{\varepsilon, C}(\xi, \mathrm du) \, \sigma_{T}^2(\xi, u).
	\end{equation*}
	Since $\sigma_{T}^2(\xi, u)$ is increasing under deletion of marked intervals we can further estimate
	\begin{equation}
	\label{Equation: Estimate diffusion constant with PPP}
	\sigma^2(\alpha) \leq \limsup_{T \to \infty} \int_{\triangle^\cup} \Gamma_{\varepsilon \alpha, T}(\mathrm d\xi) \int_{\{0, C\}^{N(\xi)}} \tilde \kappa_{\varepsilon, C}(\xi, \mathrm du) \, \sigma_{T}^2(\xi, u).
	\end{equation}
	Notice that intervals with zero mark do not contribute to $\sigma^2_T(\xi, u)$. Estimate \eqref{Equation: Estimate diffusion constant with PPP} in combination with Lemma \ref{Lemma: Thinned queue covers interval}
	will yield Theorem \ref{Theorem: Our result}.
	
	\begin{lemma}
		\label{Lemma: stochastic domination of chi}
		Let $C>0$ and $\xi = ((s_i, t_i))_{1 \leq i \leq n} \in \triangle^\cup$ with $F_\varepsilon(\xi)<\infty$. Then $\kappa_\varepsilon(\xi,\, \cdot \,)$ stochastically dominates $\tilde \kappa_{\varepsilon, C}(\xi, \cdot)$.
	\end{lemma}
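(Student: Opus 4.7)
The plan is to reduce the multivariate stochastic domination to a one-coordinate conditional comparison, then use a Gaussian tilt identity to rewrite the conditional distribution in single-interval form, and finally verify the resulting one-dimensional estimate.

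The first step is a coupling reduction. I claim that it suffices to show, for each $i\in\{1,\dots,n\}$ and each $u_{-i}\in[0,\infty)^{n-1}$, that the one-dimensional conditional $\kappa_\varepsilon(\xi,\mathrm du_i\mid u_{-i})$ stochastically dominates $\mathcal P_{\varepsilon,C}(t_i-s_i,\cdot)$. The joint domination then follows from a standard sequential coupling: at stage $i$ one samples $U_i$ from $\kappa_\varepsilon(\xi,\cdot\mid U_1,\dots,U_{i-1})$, which is a mixture over $(u_{i+1},\dots,u_n)$ of full conditionals and hence itself dominates $\mathcal P_{\varepsilon,C}(t_i-s_i,\cdot)$, so Strassen's theorem permits coupling $U_i$ with an independent $V_i\sim\mathcal P_{\varepsilon,C}(t_i-s_i,\cdot)$ satisfying $U_i\ge V_i$.

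The conceptual heart of the proof is the next step, a Gaussian tilt identification of the conditional. Writing $\phi(\xi,u)=\mathbb E_{\mathcal W}\!\bigl[\prod_j e^{-u_j^2|X_{s_j,t_j}|^2/2}\bigr]$ and isolating the $i$-th factor produces
\[
\kappa_\varepsilon(\xi,\mathrm du_i\mid u_{-i})\;\propto\;\mathbb E_{\mathbb Q_{u_{-i}}}\!\bigl[e^{-u_i^2|X_{s_i,t_i}|^2/2}\bigr]\,\nu_\varepsilon(\mathrm du_i),
\]
where $\mathbb Q_{u_{-i}}$ is the probability measure on path space whose Radon--Nikodym density with respect to $\mathcal W$ is proportional to $\prod_{j\neq i}e^{-u_j^2|X_{s_j,t_j}|^2/2}$. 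Because this tilt is an even positive semidefinite quadratic form in the Brownian increments and factorises across the $d$ spatial coordinates, $X_{s_i,t_i}$ is, under $\mathbb Q_{u_{-i}}$, a centered Gaussian vector with independent coordinates of common variance $\sigma_i^2$, and the identity $(\Sigma^{-1}+A)^{-1}\preceq\Sigma$ (with $\Sigma$ the Wiener covariance of increments $(X^1_{s_j,t_j})_j$ and $A=\operatorname{diag}(u_j^2\1_{j\neq i})$) yields $\sigma_i^2\le t_i-s_i$. Consequently,
\[
\kappa_\varepsilon(\xi,\mathrm du_i\mid u_{-i})\;=\;\frac{(1+u_i^2\sigma_i^2)^{-d/2}}{h_\varepsilon(\sigma_i^2)}\,\nu_\varepsilon(\mathrm du_i),
\]
which has exactly the form of the single-interval kernel with effective length $\sigma_i^2\le t_i-s_i$.

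Finally, I verify the one-dimensional comparison. For $t_i-s_i>2$ the target is $\delta_0$ and there is nothing to show. For $t_i-s_i\le2$, hence $\sigma_i^2\le2$, decomposing $\nu_\varepsilon=c u^{\gamma-1}\mathrm du+\varepsilon\delta_0$ and substituting $v=u\sigma_i$ factors the required probability as
\[
\bigl(1-\varepsilon/h_\varepsilon(\sigma_i^2)\bigr)\cdot\frac{G(C\sigma_i)}{G(0)},\qquad G(a):=\int_a^\infty(1+v^2)^{-d/2}v^{\gamma-1}\,\mathrm dv.
\]
Monotonicity of $h_\varepsilon$ gives $1-\varepsilon/h_\varepsilon(\sigma_i^2)\ge1-\varepsilon/h_\varepsilon(2)$, and monotonicity of $G$ with $C\sigma_i\le C\sqrt2$ reduces the desired bound $\mathbb P(u_i\ge C\mid u_{-i})\ge p_\varepsilon(C)$ to the scalar inequality
\[
(1+4C^2)^{d/2}\,G(C\sqrt2)\;\ge\;2^{-\gamma/2}\,G(0)\qquad(C\ge0),
\]
which is verified by direct estimates on the integrand of $G$ (handling small and large $C$ separately via $G(C\sqrt2)\ge G(0)-(C\sqrt2)^\gamma/\gamma$ and, for $v\ge C\sqrt2$, the pointwise bound $1+v^2\le(1+4C^2)v^2/(2C^2)$). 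The main obstacle is the tilt step: recognising that, despite the non-product coupling of the $u_j$ through $\phi(\xi,u)$, the conditional along a single coordinate collapses to a single-interval kernel with an effectively smaller interval length; once this identity is in hand, the coupling reduction is routine and the scalar comparison is elementary.
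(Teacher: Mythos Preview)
Your route is genuinely different from the paper's. The paper never conditions on the exact values $u_{-i}$; it works with the coarser events $\{\chi_j=\omega_j\}$ (Holley's criterion on the $\{0,C\}$-valued variables) and manipulates the full integral $\phi(\xi,u)$ directly, using the shift $u_1\mapsto u_1+C$, the inequality $(C+u_1)^2/2\le C^2+u_1^2$, and Corollary~6.3 twice. Your tilt identification is cleaner: you recognise that the full conditional $\kappa_\varepsilon(\xi,\mathrm du_i\mid u_{-i})$ is exactly a single-interval kernel with effective length $\sigma_i^2\le t_i-s_i$, which reduces the whole lemma to one scalar inequality. Both the coupling reduction (sequential quantile coupling against a product target) and the tilt step are correct.

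The gap is in your final scalar estimate. With $G(a)=\int_a^\infty(1+v^2)^{-d/2}v^{\gamma-1}\,\mathrm dv$, you need $(1+4C^2)^{d/2}G(C\sqrt2)\ge 2^{-\gamma/2}G(0)$ for \emph{all} $C\ge0$, but your small/large-$C$ bounds do not meet. For the Fr\"ohlich case $d=3$, $\gamma=1$ one has $G(0)=1$; your large-$C$ bound gives $(1+4C^2)^{3/2}G(C\sqrt2)\ge \sqrt2\,C$, which needs $C\ge1$, while the small-$C$ bound $(1+4C^2)^{3/2}\bigl(G(0)-(C\sqrt2)^\gamma/\gamma\bigr)\ge 2^{-1/2}$ fails already around $C\approx0.57$. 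So the interval roughly $(0.57,1)$ is not covered. The inequality is nevertheless true, and the paper's shift trick proves it uniformly in $C$ in one line when translated to your one-dimensional setting: for $\sigma^2\le2$ write
\[
\int_C^\infty(1+u^2\sigma^2)^{-d/2}u^{\gamma-1}\,\mathrm du
=\int_0^\infty(1+(u+C)^2\sigma^2)^{-d/2}(u+C)^{\gamma-1}\,\mathrm du
\ge\int_0^\infty(1+(u+C)^2\sigma^2)^{-d/2}u^{\gamma-1}\,\mathrm du,
\]
use $(u+C)^2\le 2u^2+2C^2$ and $\sigma^2\le2$ to get $1+(u+C)^2\sigma^2\le(1+4C^2)(1+2u^2\sigma^2)$, and substitute $w=u\sqrt2$ to obtain the factor $2^{-\gamma/2}(1+4C^2)^{-d/2}$ against $\int_0^\infty(1+w^2\sigma^2)^{-d/2}w^{\gamma-1}\,\mathrm dw$. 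Dividing through by $\sigma^{-\gamma}$ gives exactly $(1+4C^2)^{d/2}G(C\sigma)\ge 2^{-\gamma/2}G(0)$ for every $C\ge0$ and every $\sigma\le\sqrt2$. Replace your small/large split by this and the proof is complete.
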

	\begin{proof}
		Let
	    $
			J(\xi) \coloneqq \{1\leq i \leq n:\, t_i - s_i \leq 2\}
		$.
		The statement is trivial for $J(\xi) = \emptyset$, so assume $J(\xi) \neq \emptyset$.
We define
\begin{equation*}
	\chi:[0, \infty)^n \to \{0, C\}^{J(\xi)}, \quad \chi_i \coloneqq C \cdot \1_{\{u_i\geq C\}}
\end{equation*}
for $i \in J(\xi)$. Let $\pi: [0, \infty)^{n} \to [0, \infty)^{J(\xi)}$ be the restriction map $u \mapsto (u_i)_{i \in J(\xi)}$ and let $\phi: [0, \infty)^{n} \to [0, \infty)^n$ be defined by
\begin{equation*}
	\phi_i(u) \coloneqq \begin{cases}
	u_i &\text{ if }i\in J(\xi) \\
	0   &\text{ else}
	\end{cases}
\end{equation*}
for $u \in [0, \infty)^n$. Let $f: [0, \infty)^{n} \to \R$ be decreasing and let $\tilde f:[0, \infty)^{J(\xi)} \to \R$ be such that $\tilde f \circ \pi = f \circ \phi$. Then
\begin{align*}
	\int_{[0, \infty)^{n}} \kappa_\varepsilon(\xi, \mathrm du) f(u) &\leq \int_{[0, \infty)^{n}} \kappa_\varepsilon(\xi, \mathrm du) \tilde f(\chi(u)) \\
	\int_{[0, \infty)^{n}} \tilde \kappa_{\varepsilon, C}(\xi, \mathrm du) f(u) &= \int_{[0, \infty)^{n}} \tilde \kappa_{\varepsilon, C}(\xi, \mathrm du) \tilde f(\chi(u)).
\end{align*}
		Hence, it is sufficient to show that the distribution of $\chi$ under $\mu \coloneqq \kappa_\varepsilon(\xi, \cdot)$ stochastically dominates the distribution of $\chi$ under $\tilde \mu \coloneqq \tilde \kappa_{\varepsilon, C}(\xi, \cdot)$.
		In order to show this, it is sufficient (see e.g. \cite[pp. 137-138]{FV17}) to show that for any $\omega \in \{0, C\}^{J(\xi)}$ and any $i \in J(\xi)$
		\begin{equation*}
			\mu(\chi_i = C| \chi_j = \omega_j \forall j \neq i) \geq p_{\varepsilon}(C) = \tilde \mu(\chi_i = C| \chi_j = \omega_j \forall j \neq i).
		\end{equation*}
		To simplify notation, we assume w.l.o.g. that $i=1 \in J(\xi)$ and set $\tau_1 \coloneqq t_1-s_1$.
		Let $\omega \in \{0, C\}^{J(\xi)}$ be fixed and let $A \subseteq [0, \infty)^{n-1}$ be such that
		$
		 \{\chi_j = \omega_j \, \forall j \neq 1\} = [0, \infty) \times A.
		$	
		For $u_1\in [0, \infty)$ and $u = (u_2, \hdots, u_{n}) \in [0, \infty)^{n-1}$ we have by Corollary \ref{Corollary: Factor out of phi} of the appendix and since $(C+u_1)^2/2 \leq C^2 + u_1^2$
		\begin{align*}
			\phi(\xi, (u_1+C, u)) &= \mathbb E_\mathcal W \Big[e^{-(C + u_1)^2 |X_{s_1, t_1}|^2/2}  \prod_{i=2}^{n }e^{-u_i^2 |X_{s_i, t_i}|^2/2} \Big] \\
			&\geq \mathbb E_\mathcal W \Big[e^{-C^2 |X_{s_1, t_1}|^2} e^{-u_1^2|X_{s_1, t_1}|^2}  \prod_{i=2}^{n }e^{-u_i^2 |X_{s_i, t_i}|^2/2} \Big] \\
			&\geq  \frac{1}{(1+ 4C^2)^{d/2}} \phi(\xi, (\sqrt{2}u_1, u))
		\end{align*}
		since $\tau_1 \leq 2$ as $1\in J(\xi)$. We get (by using the fact that the density of $\nu_\varepsilon|_{(0, \infty)}$ with respect to the Lebesgue measure is increasing)
		\begin{align*}
			&\mu\big(\chi_1 = C, \chi_j = \omega_j \, \forall j\neq 1\big) \\
			&= \frac{1}{F_\varepsilon(\xi)}\int_{(C, \infty)}\nu_\varepsilon(\mathrm du_1) \int_{A} \nu_\varepsilon^{\otimes(n-1)}(\mathrm du) \phi(\xi, (u_1, u)) \\
			&\geq \frac{1}{F_\varepsilon(\xi)}\int_{(0, \infty)}\nu_\varepsilon(\mathrm du_1) \int_{A} \nu_\varepsilon^{\otimes(n-1)}(\mathrm du) \phi(\xi, (u_1+C, u)) \\
			&\geq \frac{1}{(1+ 4C^2)^{d/2}} \frac{1}{F_\varepsilon(\xi)}\int_{(0, \infty)}\nu_\varepsilon(\mathrm du_1) \int_{A} \nu_\varepsilon^{\otimes(n-1)}(\mathrm du) \phi(\xi, (\sqrt{2}u_1, u)) \\
			&= \frac{2^{-\gamma/2} }{(1+ 4C^2)^{d/2}} \frac{1}{F_\varepsilon(\xi)}\int_{(0, \infty)}\nu_\varepsilon(\mathrm du_1) \int_{A} \nu_\varepsilon^{\otimes(n-1)}(\mathrm du) \phi(\xi, (u_1, u)) \\
			&=\frac{2^{-\gamma/2} }{(1+ 4C^2)^{d/2}} \frac{1}{F_\varepsilon(\xi)}\bigg(\int_{[0, \infty)}\nu_\varepsilon(\mathrm du_1)  \int_{A} \nu_\varepsilon^{\otimes(n-1)}(\mathrm du) \phi(\xi, (u_1, u))\\
			& \quad \quad \quad \quad \quad \quad \quad \quad \quad \quad \quad \quad - \varepsilon \int_{A} \nu_\varepsilon^{\otimes(n-1)}(\mathrm du) \phi(\xi', u) \bigg)
		\end{align*}
		where $\xi' = ((s_2, t_2), \hdots, (s_n, t_n))$. By Corollary \ref{Corollary: Factor out of phi}, for any $u_1\in [0, \infty), u\in [0, \infty)^{n-1}$
		\begin{equation*}
			\phi(\xi, (u_1, u)) \geq \frac{ \phi(\xi', u)}{(1+u_1^2 \tau_1)^{d/2}} \geq \frac{ \phi(\xi', u)}{(1+2u_1^2)^{d/2}} 
		\end{equation*}
		which yields
		\begin{align*}
			\int_{[0, \infty)}\nu_\varepsilon(\mathrm du_1)  \int_{A} \nu_\varepsilon^{\otimes(n-1)}(\mathrm du) \phi(\xi, (u_1, u))
			&\geq \int_{[0, \infty)} \frac{\nu_\varepsilon(\mathrm du_1)}{(1+2u_1^2)^{d/2}} \int_{A} \nu_\varepsilon^{\otimes(n-1)}(\mathrm du) \phi(\xi', u) \\
			&= h_\varepsilon(2) \int_{A} \nu_\varepsilon^{\otimes(n-1)}(\mathrm du) \phi(\xi', u).
		\end{align*}
		Hence, we get
		\begin{align*}
		&\mu\big(\chi_1 = C, \chi_j = \omega_j \,\forall j\neq 1\big) \\
		&\geq  \frac{2^{-\gamma/2}}{(1+ 4C^2)^{d/2}} \Big(1- \varepsilon/h_\varepsilon(2) \Big) \frac{1}{F_\varepsilon(\xi)} \int_{[0, \infty)}\nu_\varepsilon(\mathrm du_1)  \int_{A} \nu_\varepsilon^{\otimes(n-1)}(\mathrm du) \phi(\xi, (u_1, u)) \\
		&= p_\varepsilon(C) \mu \big(\chi_j = \omega_j \, \, \forall j\neq 1\big)
		\end{align*}
		which yields the claim. 
	\end{proof}

	\section{Finishing the proof of Theorem \ref{Theorem: Our result}}
	
	\begin{proof}[Proof of Theorem \ref{Theorem: Our result}]
	For $C>0$ let $\zeta_{\alpha, T}^{\varepsilon, C}$ be a Poisson point process on $\triangle \times \{0, C\}$ with intensity measure
	\begin{equation*}
	\alpha \varepsilon g(t-s) \1_{\{0 < s< t < T\}} \mathrm ds \mathrm dt \, \mathcal P_{\varepsilon, C}(t-s, \mathrm du).
	\end{equation*}
	Then Estimate \eqref{Equation: Estimate diffusion constant with PPP} becomes\footnote{Here $\sigma_T^2((s_1, t_1, u_1), \hdots, (s_n, t_n, u_n)) \coloneqq \sigma_T^2\big(((s_1, t_1),\hdots, (s_n, t_n)), (u_1, \hdots, u_n) \big)$}
	\begin{equation}
	\label{Equation: Estimate disffusion constant PPP}
	\sigma^2(\alpha) \leq \limsup_{T \to \infty} \mathbb E[\sigma_T^2(\zeta^{\varepsilon, C}_{\alpha, T})] \leq \limsup_{T \to \infty} \mathbb E[\sigma_T^2(\tilde \zeta^{\varepsilon, C}_{\alpha, T})]
	\end{equation}
	where $\tilde \zeta_{\alpha, T}^{\varepsilon, C}$ is the restriction of $\zeta_{\alpha, T}^{\varepsilon, C}$ to $\{(s, t): 1 \leq t-s \leq 2\} \times \{C\}$ and thus a Poisson point process with intensity measure
	\begin{equation*}
	\alpha \varepsilon p_\varepsilon(C) g(t-s) \1_{\{1\leq t-s \leq 2\}} \1_{\{0 < s< t < T\}} \mathrm ds \mathrm dt \mathcal \, \delta_{C}(\mathrm du).
	\end{equation*}
	By Lemma \ref{Lemma: Thinned queue covers interval} we have
	\begin{equation*}
	\limsup_{T \to \infty} \mathbb E[\sigma_T^2(\tilde \zeta^{\varepsilon, C}_{\alpha, T})] \leq \frac{1}{1 + \alpha \varepsilon p_\varepsilon(C) \int_1^2 r g(r)\, \mathrm dr} + \frac{1}{C^2 + 1}.
	\end{equation*}
	Choosing $C = \alpha^{1/(2+d)}$ yields the claim.
	\end{proof}
	
%	For $\xi \in \mathbf N_f(\triangle)$ we define $\mathcal T(\xi)$ to be the maximal length on the real axis that can be covered by a subconfiguration of $\xi$ consisting of pairwise disjoint intervals, i.e.
%	\begin{align*}
%	\mathcal T(\xi) \coloneqq \sup \Big\{ \sum_{i=1}^{n}t_i - s_i :\,&n\in \N,\, (s_1, t_1), \hdots, (s_n, t_n) \in \operatorname{supp}(\xi) \\
%	&\text{ such that }[s_i, t_i] \cap [s_j, t_j] = \emptyset \text{ for }i\neq j \Big\} \vee 0.
%	\end{align*}
%	
	\section{Appendix: facts about Gaussian measures}
	In the following, we collect a few facts about Gaussian measures (which were in a similar form already applied in \cite{MV19}) in order to be self contained.
	\begin{lemma}
		\label{Lemma: phi as a determiant}
		Let $X = (X_1,\hdots, X_n)$ be a centered Gaussian vector with covariance matrix $C$. Then
		\begin{equation*}
			\mathbb E\Big[\prod_{i=1}^{n} e^{-X_i^2/2}\Big] = \frac{1}{\sqrt{\det(I_{n}+ C)}}.
		\end{equation*}
	\end{lemma}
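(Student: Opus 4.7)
The plan is to reduce the computation to a product of one-dimensional Gaussian integrals via the spectral decomposition of $C$. Since the $X_i$ are jointly Gaussian, the product $\prod_{i=1}^n e^{-X_i^2/2}$ equals $e^{-\|X\|^2/2}$, so the task is to compute $\mathbb{E}[e^{-\|X\|^2/2}]$.

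Write $C = U \Lambda U^\top$ with $U$ orthogonal and $\Lambda = \operatorname{diag}(\lambda_1, \ldots, \lambda_n)$ with $\lambda_i \geq 0$, and set $Y \coloneqq U^\top X$. Then $Y$ is a centered Gaussian vector with covariance $\Lambda$, so the coordinates $Y_1, \ldots, Y_n$ are independent with $Y_i \sim \mathcal N(0, \lambda_i)$. Because $U$ is orthogonal, $\|X\|^2 = X^\top X = Y^\top U^\top U Y = \|Y\|^2$, hence
\begin{equation*}
\mathbb{E}\Big[\prod_{i=1}^n e^{-X_i^2/2}\Big] = \mathbb{E}[e^{-\|Y\|^2/2}] = \prod_{i=1}^n \mathbb{E}[e^{-Y_i^2/2}].
\end{equation*}

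A direct one-dimensional Gaussian integration gives, for $\lambda_i > 0$,
\begin{equation*}
\mathbb{E}[e^{-Y_i^2/2}] = \int_{\mathbb{R}} \frac{1}{\sqrt{2\pi \lambda_i}} \, e^{-y^2/(2\lambda_i)} e^{-y^2/2} \, \mathrm dy = \frac{1}{\sqrt{1+\lambda_i}},
\end{equation*}
and the formula also holds trivially for $\lambda_i = 0$ (both sides equal $1$). Multiplying yields
\begin{equation*}
\mathbb{E}\Big[\prod_{i=1}^n e^{-X_i^2/2}\Big] = \prod_{i=1}^n \frac{1}{\sqrt{1+\lambda_i}} = \frac{1}{\sqrt{\det(I_n + \Lambda)}} = \frac{1}{\sqrt{\det(I_n + C)}},
\end{equation*}
where the last equality uses that the determinant is invariant under orthogonal conjugation. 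There is no genuine obstacle here; the only mild subtlety is allowing $C$ to be singular, which the diagonalization approach absorbs seamlessly since each eigenvalue is treated separately.
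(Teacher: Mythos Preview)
Your proof is correct and follows essentially the same approach as the paper: both diagonalize $C$ via the spectral theorem to reduce $\mathbb{E}[e^{-\|X\|^2/2}]$ to a product of one-dimensional Gaussian integrals, yielding $\prod_i (1+\lambda_i)^{-1/2} = \det(I_n+C)^{-1/2}$. The only cosmetic difference is that the paper writes $X \stackrel{d}{=} O\Lambda^{1/2} Z$ with $Z$ standard normal and integrates in the $z$-variable, whereas you set $Y = U^\top X$ and integrate in the $y$-variable; these are the same argument.
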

	\begin{proof}
		Write $C = O \Lambda O^T$ with $O \in \R^{n\times n}$ orthogonal and $\Lambda \in \R^{n\times n}$ diagonal with non-negative entries. Then $C = A A^T$ with $A \coloneqq O \Lambda^{1/2}$.
		Let $Z = (Z_1, \hdots, Z_n) \sim \mathcal N(0, 1)^{\otimes n}$. Then $AZ \overset{d}{=} X$ and thus
		\begin{align*}
			\mathbb E\Big[\prod_{i=1}^{n} e^{-X_i^2/2}\Big] = \mathbb E\Big[e^{-|X|^2/2}\Big] &= \frac{1}{(2 \pi)^{n/2}} \int_{\R^n} e^{-|Az|^2/2} e^{-|z|^2/2} \mathrm dz \\
			&= \frac{1}{(2 \pi)^{n/2}} \int_{\R^n} e^{-\sum_{i=1}^n (1 + \lambda_i )z_i^2/2} \mathrm dz \\
			&= \prod_{i=1}^n (1+ \lambda_i)^{-1/2}.
		\end{align*}
		Since $\prod_{i=1}^n (1+ \lambda_i) = \det(I + \Lambda)= \det(I + C)$ the statement follows.	
	\end{proof}

	\begin{lemma}
		\label{Lemma: Determiant in terms of L2 distance}
		Let $X = (X_1,\hdots, X_n)$ be a centered Gaussian vector with covariance matrix $C$. Then
		\begin{equation*}
			\det(C) = \prod_{k=1}^{n} \operatorname{dist}_{L^2}(X_k, \operatorname{span}\{X_1, \hdots, X_{k-1}\})^2.
		\end{equation*}
	\end{lemma}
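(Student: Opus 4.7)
The plan is to prove the identity by reducing it, via Gram--Schmidt orthogonalization in $L^2$, to the trivial fact that the determinant of a diagonal matrix is the product of its diagonal entries. The key observation is that $C$ is precisely the Gram matrix of $X_1, \ldots, X_n$ viewed as vectors in $L^2(\mathbb{P})$, since $C_{ij} = \mathbb{E}[X_i X_j] = \langle X_i, X_j \rangle_{L^2}$.

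First I would set $V_{k-1} := \operatorname{span}\{X_1, \ldots, X_{k-1}\}$ (with $V_0 := \{0\}$) and define the Gram--Schmidt residuals
\begin{equation*}
Y_k := X_k - P_{V_{k-1}} X_k, \qquad k = 1, \ldots, n,
\end{equation*}
where $P_{V_{k-1}}$ is the orthogonal projection in $L^2$. By construction $\|Y_k\|_{L^2} = \operatorname{dist}_{L^2}(X_k, V_{k-1})$ and the $Y_k$ are pairwise orthogonal, so the vector $Y = (Y_1, \ldots, Y_n)$ is centered Gaussian with diagonal covariance matrix $D = \operatorname{diag}(\|Y_1\|_{L^2}^2, \ldots, \|Y_n\|_{L^2}^2)$.

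Next, I would express $X$ in terms of $Y$ by a lower triangular matrix. Since $P_{V_{k-1}} X_k \in V_{k-1} = \operatorname{span}\{Y_1, \ldots, Y_{k-1}\}$, there exist scalars $L_{kj}$ for $j<k$ such that $X_k = Y_k + \sum_{j<k} L_{kj} Y_j$; setting $L_{kk} := 1$ and $L_{kj} := 0$ for $j > k$ yields $X = LY$ with $L$ lower triangular of determinant one. Taking covariances gives $C = L D L^T$, hence
\begin{equation*}
\det(C) = \det(L)^2 \det(D) = \prod_{k=1}^n \|Y_k\|_{L^2}^2 = \prod_{k=1}^n \operatorname{dist}_{L^2}(X_k, V_{k-1})^2,
\end{equation*}
which is the claim.

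I do not expect a genuine obstacle. The only point to mention is the degenerate case: if the $X_i$ are linearly dependent in $L^2$, then some $Y_k = 0$, the right-hand side vanishes trivially, and on the left the diagonal matrix $D$ (and hence $C = LDL^T$) is singular, so $\det(C) = 0$ as well. Thus the identity holds without any invertibility assumption on $C$.
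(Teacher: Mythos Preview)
Your proof is correct. Both arguments rest on the same idea---orthogonally projecting each $X_k$ onto the span of its predecessors---but the execution differs. You perform the full Gram--Schmidt at once and package it as the factorization $C = LDL^T$ with $L$ unit lower triangular, so the determinant identity drops out immediately from $\det(L)=1$. The paper instead peels off only the last coordinate, writing $X_n = Y_n + Z_n$ with $Z_n \perp \operatorname{span}\{X_1,\dots,X_{n-1}\}$, and then expands $\det(C)$ via the Leibniz formula to obtain $\det(C) = \mathbb{E}[Z_n^2]\det(C') + \det(C'')$, where $C''$ is singular; the result then follows by induction. Your route is the cleaner linear-algebra argument (it is essentially the standard Gram-determinant proof and never touches the Leibniz expansion), while the paper's version is more hands-on but makes the inductive structure explicit. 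Your treatment of the degenerate case is also fine: when some $Y_k=0$ the identity $X=LY$ and hence $C=LDL^T$ still hold, and both sides vanish.
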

	
	\begin{proof}
		Write $X_n = Y_n + Z_n$ with $Y_n \in \operatorname{span}\{X_1, \hdots, X_{n-1}\}$ and $Z_n \perp \operatorname{span}(X_1, \hdots, X_{n-1})$. Define $Y_i \coloneqq X_i$ for $1\leq i \leq n-1$. Let $C'$ and $C''$ be the covariance matrices of $(Y_1, \hdots, Y_{n-1})$ and $(Y_1, \hdots, Y_{n})$ respectively. Then, by the Leibniz formula for the determinant
		\begin{align*}
			\det(C) &= \sum_{\sigma \in S_n}  \operatorname{sgn}(\sigma) \prod_{i=1}^{n} \mathbb E[X_{i} X_{\sigma(i)}] \\
			&= \mathbb E[X_n^2]\sum_{\sigma: \sigma(n) = n} \operatorname{sgn}(\sigma)\prod_{i=1}^{n-1} \mathbb E[X_{i} X_{\sigma(i)}] + \sum_{\sigma: \sigma(n)\neq n }  \operatorname{sgn}(\sigma) \prod_{i=1}^{n} \mathbb E[X_{i} X_{\sigma(i)}] \\
			&= \big(\mathbb E[Y_n^2] + \mathbb E[Z_n^2]\big)\sum_{\sigma:\sigma(n) = n}  \operatorname{sgn}(\sigma)\prod_{i=1}^{n-1} \mathbb E[Y_{i} Y_{\sigma(i)}] + \sum_{\sigma: \sigma(n)\neq n }  \operatorname{sgn}(\sigma) \prod_{i=1}^{n} \mathbb E[Y_{i} Y_{\sigma(i)}] \\
			&= \mathbb E[Z_n^2] \det(C') + \det(C'').
		\end{align*}
		Since $Y_n \in \operatorname{span}\{Y_1, \hdots, Y_{n-1}\}$ we have $\det(C'') = 0$ and the statement follows inductively.
	\end{proof}
	\begin{cor}
		\label{Corollary: Factor out of phi}
		For any centered Gaussian vector $X= (X_1, \hdots, X_n)$ we have
	\begin{equation*}
		\mathbb E\Big[\prod_{i=1}^{n} e^{-X_i^2/2}\Big] \geq \frac{1}{\sqrt{1 + \mathbb E[X_n^2]}}  \mathbb E\Big[\prod_{i=1}^{n-1} e^{-X_i^2/2}\Big].
	\end{equation*}
	\end{cor}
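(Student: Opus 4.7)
The plan is to reduce the inequality to a statement about determinants by invoking Lemma \ref{Lemma: phi as a determiant}, and then exploit the multiplicative decomposition of a determinant furnished by Lemma \ref{Lemma: Determiant in terms of L2 distance}. Specifically, writing $C$ for the covariance matrix of $X=(X_1,\ldots,X_n)$ and $C'$ for that of $(X_1,\ldots,X_{n-1})$, Lemma \ref{Lemma: phi as a determiant} turns the claimed inequality into
\begin{equation*}
\det(I_n + C) \;\leq\; \bigl(1 + \mathbb{E}[X_n^2]\bigr)\,\det(I_{n-1} + C').
\end{equation*}

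To get at the left-hand side, I would introduce an auxiliary Gaussian vector: let $Z_1,\ldots,Z_n$ be i.i.d.\ $\mathcal{N}(0,1)$ independent of $X$, and set $Y_i := X_i + Z_i$. Then $Y=(Y_1,\ldots,Y_n)$ is centered Gaussian with covariance $C + I_n$, while its first $n-1$ components have covariance $C' + I_{n-1}$. Applying Lemma \ref{Lemma: Determiant in terms of L2 distance} to both vectors, the first $n-1$ factors in the two resulting products are identical, so
\begin{equation*}
\frac{\det(I_n + C)}{\det(I_{n-1} + C')} \;=\; \operatorname{dist}_{L^2}\bigl(Y_n, \operatorname{span}\{Y_1,\ldots,Y_{n-1}\}\bigr)^2.
\end{equation*}

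The final step is the trivial bound: an $L^2$ distance to any subspace is at most the $L^2$ norm of the vector itself, so the above distance is at most $\|Y_n\|_{L^2}^2 = \mathbb{E}[X_n^2] + \mathbb{E}[Z_n^2] = 1 + \mathbb{E}[X_n^2]$, where we used independence of $X_n$ and $Z_n$. Combining and taking reciprocal square roots yields the corollary.

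I do not expect any step to be a serious obstacle; the only mildly nontrivial observation is the convolution trick of adding an independent $\mathcal{N}(0, I_n)$ vector to turn the shifted covariance $I_n + C$ into a genuine covariance, so that Lemma \ref{Lemma: Determiant in terms of L2 distance} becomes applicable. Once that is in place the telescoping of the first $n-1$ factors and the crude distance-norm inequality finish the proof cleanly.
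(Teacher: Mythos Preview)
Your proof is correct and is essentially identical to the paper's own argument: both introduce an independent $\mathcal{N}(0,I_n)$ vector $Z$ so that $X+Z$ has covariance $I_n+C$, apply Lemma~\ref{Lemma: phi as a determiant} and Lemma~\ref{Lemma: Determiant in terms of L2 distance} to express the ratio of the two sides as $\operatorname{dist}_{L^2}(X_n+Z_n,\operatorname{span}\{X_i+Z_i:i<n\})$, and then bound this distance by $\|X_n+Z_n\|_{L^2}=\sqrt{1+\mathbb{E}[X_n^2]}$. The only difference is cosmetic---you phrase the intermediate step as a determinant inequality before invoking Lemma~\ref{Lemma: Determiant in terms of L2 distance}, whereas the paper writes the distance ratio directly.
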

	\begin{proof}
		Let $Z = (Z_1, \hdots, Z_n) \sim \mathcal N(0, 1)^{\otimes n}$ be independent of $X$. Then the covariance matrix of $Z+X$ is $I+C$, where $C$ is the covariance matrix of $X$.
		By the previous two lemmas, 
		\begin{equation*}
			\mathbb E\Big[\prod_{i=1}^{n} e^{-X_i^2/2}\Big] = \frac{\mathbb E\Big[\prod_{i=1}^{n-1} e^{-X_i^2/2}\Big]}{\operatorname{dist}_{L^2}\big(X_n + Z_n, \operatorname{span}\{X_1+ Z_1, \hdots, X_{n-1}+Z_{n-1}\}\big)}.
		\end{equation*}
		The statement follows from
		\begin{align*}
			\operatorname{dist}_{L^2}(X_n + Z_n, \operatorname{span}\{X_1+ Z_1, \hdots, X_{n-1}+Z_{n-1}\} &\leq \mathbb E[(X_n + Z_n)^2]^{1/2} \\
			&= \sqrt{1+ \mathbb E[X_n^2]}. \qedhere
		\end{align*}
	\end{proof}

\end{document}